\newtheorem{thm}{Theorem}[section]
\newtheorem{cor}[thm]{Corollary}
\newtheorem{lem}[thm]{Lemma}
\newtheorem{prop}[thm]{Proposition}
\numberwithin{equation}{section}
\theoremstyle{definition}
\newtheorem{definition}[thm]{Definition}
\newtheorem{rem}[thm]{Remark}
\DeclareMathOperator\ess{ess}
\begin{document}
 \title[Fourier multipliers on quantum Euclidean spaces]{$L^p$ -$L^q$ boundedness of Fourier multipliers on quantum Euclidean spaces}

\author[Michael Ruzhansky]{Michael Ruzhansky}
\address{
 Michael Ruzhansky:
  \endgraf
 Department of Mathematics: Analysis, Logic and Discrete Mathematics,
  \endgraf
 Ghent University, Ghent,
 \endgraf
  Belgium 
  \endgraf
  and 
 \endgraf
 School of Mathematical Sciences, Queen Mary University of London, London,
 \endgraf
 UK  
 \endgraf
  {\it E-mail address} {\rm michael.ruzhansky@ugent.be}
  }

\author[Serikbol Shaimardan]{Serikbol Shaimardan}
\address{
  Serikbol Shaimardan:
  \endgraf
  Institute of Mathematics and Mathematical Modeling, 050010, Almaty, 
  \endgraf
  Kazakhstan 
  \endgraf
  and 
  \endgraf
Department of Mathematics: Analysis, Logic and Discrete Mathematics
  \endgraf
 Ghent University, Ghent,
 \endgraf
  Belgium
  \endgraf
  {\it E-mail address} {\rm shaimardan.serik@gmail.com} 
  }

 \author[Kanat Tulenov]{Kanat Tulenov}
\address{
  Kanat Tulenov:
  \endgraf
  Institute of Mathematics and Mathematical Modeling, 050010, Almaty, 
  \endgraf
  Kazakhstan 
  \endgraf
  and 
  \endgraf
Department of Mathematics: Analysis, Logic and Discrete Mathematics
  \endgraf
 Ghent University, Ghent,
 \endgraf
  Belgium
  \endgraf
  {\it E-mail address} {\rm kanat.tulenov@ugent.be} 
  }

\date{}

\begin{abstract}
In this paper, we study Fourier multipliers on quantum Euclidean spaces and obtain results on their $L^p -L^q$ boundedness. On the way to get these results, we prove Paley, Hausdorff-Young-Paley, and Hardy-Littlewood inequalities on the quantum Euclidean space. As applications, we establish the $L^p -L^q$ estimate for the heat semigroup and Sobolev embedding theorem on quantum Euclidean spaces. We also obtain quantum analogues of logarithmic Sobolev and Nash type inequalities.
\end{abstract}

\subjclass[2020]{46L51, 46L52,  47L25, 11M55, 46E35, 42B15, 42B05, 43A50, 42A16, 32W30}

\keywords{Noncommutative Euclidean space, Fourier multipliers, Hausdorff-Young inequality, Sobolev embedding, heat semigroup.}

\maketitle

\tableofcontents
{\section{Introduction}}

 Quantum Euclidean spaces $\mathbb{R}^d_{\theta}$ defined in terms of an arbitrary antisymmetric real $d\times d$ matrix $\theta$, which are also known as Moyal spaces or quantum Euclidean spaces, are main objectives in the quantum geometry (see \cite{CGRS, green-book, GGSVM}), since they play the role of model noncompact spaces. In \cite{Rieffel}, Rieffel defined the quantum Euclidean space in terms of a deformation quantization of the classical Euclidean space $\mathbb{R}^d.$ These spaces were studied also by Moyal \cite{M} and Groenwald \cite{Gro} from diverse perspectives. Their aim is to deform the algebra of smooth functions on the classical Euclidean space $\mathbb{R}^d$ by replacing the pointwise product of functions with the twisted Moyal product. The so-called quantum-classical Weyl operators on $\mathbb{R}^d_{\theta}$ have been extensively studied in \cite{BW, DW}. Furthermore, the notion of convolution on $\mathbb{R}^d_{\theta}$, in the case where $\theta$ is a real invertible skew-symmetric matrix, was originally introduced in \cite{W}.
There are many equivalent constructions of quantum Euclidean spaces that appeared in the literature including defining them as the von Neumann algebra $L^{\infty}(\mathbb{R}^d_{\theta})$ with a twisted left-regular representation of $\mathbb{R}^d$ on the Hilbert space $L^2(\mathbb{R}^d).$ In this paper, our investigations are based on this definition. In the last few years, there has been substantial body of work on generalising the methods of the classical harmonic analysis on $\mathbb{R}^d$ to the quantum Euclidean space $\mathbb{R}^d_{\theta}.$ In particular, on the quantum Euclidean space $\mathbb{R}^d_{\theta},$ it is possible to define analogues of many of the tools of harmonic analysis, such as Fourier transform, differential operators and function spaces \cite{GJM,  GGSVM, GJP, HLW, LMcSZ, MSX, Mc, RShT-NoDEA, RShT-JFA}. For example, nonlinear partial differential equations and the Besov spaces were investigated in this quantum Euclidean space in a very recent paper \cite{Mc}. In this setting, many of classical inequalities in harmonic analysis and Fourier multipliers are seen in a new light. More precisely, in this paper we establish $L^p\to L^q$ boundedness of the Fourier multipliers in the quantum Euclidean space. To describe our main result, let us first recall the classical H\"ormander Fourier multiplier theorem settled in \cite[Theorem 1.11]{Hor}: for $1 < p \leq  2\leq q <\infty,$ the Fourier multiplier $g(D):\mathcal{S}(\mathbb{R}^d)\to\mathcal{S}(\mathbb{R}^d)$ with the symbol $g:\mathbb{R}^d \to \mathbb{C}$ defined by $\widehat{g(D)(f)}=g \cdot \widehat{f}, \quad f\in\mathcal{S}(\mathbb{R}^d),$ has a bounded extension from $L^p(\mathbb{R}^{d})$ to $L^q(\mathbb{R}^{d})$ if for the symbol $g$ the condition 
$$\sup\limits_{\lambda>0}\lambda\left(\int\limits_{x\in\mathbb{R}^d \atop  |g(x)|\geq\lambda}dx\right)^{\frac{1}{p}-\frac{1}{q}}<\infty$$
holds, where
$\widehat{f}$ is the Fourier transform
$$\widehat{f}(t)=(2\pi)^{-d/2}\int_{\mathbb{R}^d}f(s)e^{-i(t,s)}ds, \quad t\in \mathbb{R}^d,$$ and $(t,s)$ denotes the standard inner product of vectors $t$ and $s$ in $\mathbb{R}^d.$ The study of Fourier multipliers has been attracting attention of many researchers due to the fact that they have numerous applications in harmonic analysis, in particular, in PDEs. For the $L^p\to L^p$ boundedness of Fourier multipliers we refer the  reader to \cite{Hor, KM, Mikh, Mar, Stein}. 
There are also other works, which were devoted to the study of the $L^p \to L^q$ boundedness of the Fourier multipliers, see e.g. \cite{AMR, ARN, CK, Cow1993, Cow1974, K2, K3, K4, M-Nur, NT1, NT2, RV, RT-AdvOper, Z}. For example, in \cite{AR}, the authors obtained the $L^p\to L^q$ H\"ormander type theorem in the context of locally compact groups with applications such as the $L^p\to L^q$ boundedness of the heat semigroup and Sobolev embedding theorem in which our results are motivated.   
In this work, we study the $L^p\to L^q$ boundedness of the Fourier multipliers on the quantum Euclidean space. In other words, we obtain an analogue of the above mentioned H\"ormander Fourier multiplier theorem  on the quantum Euclidean space (see Theorem \ref{H-multiplier theorem-th}). In general, Fourier multipliers and their $L^p\to L^p$ and $L^p\to L^q$ boundedness were studied very recently in \cite{Mc} in terms of the so called Weyl transform. However, the condition for the symbol $g$ there is that the inverse Fourier transform of the symbol must  belong to $L^{1}(\mathbb{R}^d).$ Our condition is weaker than that of \cite[Theorem 3.2]{Mc}. On the other hand, our definition of the Fourier multiplier is more natural, and many properties are transferred similar to the commutative case. As in the classical case, the proof of the main result hinges upon the Paley inequality and Hausdorff–Young–Paley inequality for the Fourier transform obtained by using the Hausdorff–Young inequality. 
Therefore, on the way to achieve our goal, we obtain Paley, Hausdorff-Young-Paley, Hardy-Littlewood, and also the Logarithmic Sobolev inequalities on the quantum Euclidean space. We also show a simple prove of the $L^p\to L^q$ boundedness of the Fourier multipliers without using the Paley and Hausdorff-Young-Paley inequalities in our setting, where we used the idea in \cite{Z}.
In particular, we demonstrate several applications of the H\"ormander type theorem 
to obtain an $L^p\to L^q$ estimate for the heat semigroup (see Corollary \ref{heat-kernel}) and Sobolev type inequality (see Theorem \ref{sobolev-embed}) for $1<p\leq 2\leq q<\infty$ on the quantum Euclidean space. Finally, we proved logarithmic Sobolev and Nash type inequalities on the quantum Euclidean space.
Very recently, we have found a paper by H.J. Samuelsen \cite{HJS} which is closely related to our investigation, where the author studied Fourier-Wigner multipliers (which is an analog of the Fourier multiplier) in quantum phase space. The quantum phase space corresponds, up to unitary equivalence, to a special case of the quantum Euclidean space when $\theta$ is an invertible skew-symmetric matrix. Moreover, in this setting, the $L^p$-spaces over $\mathbb{R}^d_{\theta}$ are isomorphic to Schatten von Neumann classes, and there exist well-defined embeddings between $L^p$-spaces for varying values of $p$. Therefore, some results (for example, \cite[Proposition 2.6-2.7]{HJS}, \cite[Lemma 3.9.]{HJS}, and \cite[Theorem 3.13]{HJS}) reported in \cite{HJS} can be viewed as special case of our analysis in this work. A central distinction in our work is that we address the general situation where $\theta$ is not necessarily invertible. In this case, the usual inclusions fail in the general setting, which leads to substantial analytical differences. This distinction is crucial and creates additional difficulties when establishing inequalities and proving various results.

\;\;\;\;

\section{Preliminaries}
 
 \subsection{Quantum Euclidean space $L^{\infty}(\mathbb{R}^{d}_{\theta})$} \label{NC Euclidean space}
We refer the reader to \cite{GJM}, \cite{GGSVM}, \cite{LMcSZ}, \cite{MSX}, and \cite{Mc},  for a detailed discussion of the quantum  Euclidean space $\mathbb{R}^{d}_{\theta}$ and for additional information. 

Let $H$ be a Hilbert space. We denote by $B(H)$ the algebra of all bounded linear operators on $H.$ As usual $L^p(\mathbb{R}^d)$ ($1\leq p<\infty$) is the $L^p$-spaces of pointwise almost-everywhere equivalence classes of $p$-integrable functions and $L^{\infty}(\mathbb{R}^d)$ is the space of essentially bounded functions on the Euclidean space $\mathbb{R}^d.$  
For $1\leq p,q\leq\infty,$ the Lorentz space on $\mathbb{R}^d$ is the space of complex-valued measurable functions $f$ on $\mathbb{R}^d$ such that the following quasinorm is finite 
\begin{equation}\label{weak-Lp-commut}
\|f\|_{L^{p,q}(\mathbb{R}^d)}=\left\{ \begin{array}{rcl}
         \left(\int\limits_0^{\infty}\left(t^{\frac{1}{p}}\mu(t,f)\right)^q\frac{dt}{t}\right)^{\frac{1}{q}} & \mbox{for}
         & q<\infty; \\ \sup\limits_{t>0}t^\frac{1}{p}\mu(t,f)\;\;\;\;\;\;\;\;\;\;\;\; & \mbox{for} & q=\infty.  
                \end{array}\right. 
\end{equation}
where $\mu(\cdot,f)$ is the decreasing rearrangement of the function $f.$ For more details about these spaces, we refer the reader to \cite{G2008}. The next results will be used in Section \ref{5s}.
\begin{prop}\label{G-Prop} \begin{itemize}
\item[(i)] \cite[Proposition 1.4.10, p. 49]{G2008} Let $ 0 < p \leq \infty $ and $0 < q < r \leq\infty.$ Then there exists a constant $C_{p,q,r}>0$ (which depends on p, q, and r) such that
\begin{eqnarray}\label{Lorentz-embedding-1}
\|f\|_{L^{p,r}(\mathbb{R}^d)}\leq C_{p,q,r}\|f\|_{L^{p,q}(\mathbb{R}^d)}.    
\end{eqnarray} 
\item[(ii)] \cite[Exercises 1.4.19, p. 73]{G2008} Let $0<p,q,r\leq \infty,$ $0<s_1,s_2\leq\infty,$    $\frac{1}{p}+\frac{1}{q}=\frac{1}{r}$ and $\frac{1}{s_1}+\frac{1}{s_2}=\frac{1}{s}.$ Then there exists a constant $C_{p,q,s_1,s_2}>0$ (which depends on p, q, $s_1,$ and  $s_2$) such that,
\begin{eqnarray}\label{Lorentz-embedding-2}
\|fg\|_{L^{r,s}(\mathbb{R}^d)}\leq C_{p,q,s_1,s_2}\|f\|_{L^{p,s_1}(\mathbb{R}^d)}\|g\|_{L^{q,s_2}(\mathbb{R}^d)}.  
\end{eqnarray} 

\end{itemize}
\end{prop}

Given an integer $d\geq 1,$ fix an anti-symmetric $\mathbb{R}$-valued $d\times d$ matrix $\theta=\{\theta_{j,k}\}_{1\leq j,k\leq d}.$
\begin{definition}\label{NC_E_space1}Define $L^{\infty}(\mathbb{R}^{d}_{\theta})$ as the von Neumann algebra generated by the $d$-parameter strongly
continuous unitary family $\{U_{\theta}(t)\}_{t\in \mathbb{R}^{d}}$ satisfying the relation
\begin{equation}\label{weyl-relation}
U_{\theta}(t)U_{\theta}(s)=e^{\frac{1}{2}i(t,\theta s)}U_{\theta}(t+s),\quad t,s\in \mathbb{R}^d,
\end{equation}
where $(\cdot,\cdot)$ means the usual inner product in $\mathbb{R}^d.$
\end{definition}
The above relation is called the Weyl form of the canonical commutation relation. 
While it is possible to define $L^{\infty}(\mathbb{R}_{\theta}^d)$ in an abstract operator-theoretic way as in \cite{GGSVM}, we can also define the algebra as being generated by a concrete family of operators defined on the Hilbert space $L^2(\mathbb{R}^d).$  
    \begin{definition} \cite[Definition 2.1]{Mc}\label{NC_E_space2} For $t\in\mathbb{R}^d,$ denote by $U_{\theta}(t)$ the operator on $L^{2}(\mathbb{R}^{d})$ defined by the formula
    $$(U_{\theta}(t)\xi)(s)=e^{i(t,s)}\xi(s-\frac{1}{2}\theta t),\quad \xi\in L^{2}(\mathbb{R}^{d}),  \, t,s\in \mathbb{R}^d.$$
    It can be proved that the family $\{U_{\theta}(t)\}_{t\in \mathbb{R}^{d}}$ is strongly continuous and satisfies the relation \eqref{weyl-relation}.
    Then the von Neumann algebra $L^{\infty}(\mathbb{R}_{\theta}^d)$ is defined to be the weak operator topology closed subalgebra of $B(L^{2}(\mathbb{R}^{d}))$ generated by the family $\{U_{\theta}(t)\}_{t\in \mathbb{R}^{d}}$ and is called a n  quantum Euclidean space.
\end{definition}
Note that if $\theta=0,$ then above definitions reduce to the description of $L^{\infty}(\mathbb{R}^{d})$ as the
algebra of bounded pointwise multipliers on $L^{2}(\mathbb{R}^{d}).$
The structure of $L^{\infty}(\mathbb{R}_{\theta}^d)$ is determined by the Stone-von Neumann theorem, which asserts that if $\det(\theta)\neq 0,$ then any two $C^*$-algebras generated by a strongly continuous unitary family $\{U_{\theta}(t)\}_{t\in \mathbb{R}^{d}}$ satisfying Weyl relation \eqref{weyl-relation} are $*$-isomorphic \cite[Theorem 14.8]{H}. Therefore, the algebra of essentially bounded functions on $\mathbb{R}^{d}$ is recovered as a special case of the previous definition.

Generally, the algebraic nature of the quantum Euclidean space $L^{\infty}(\mathbb{R}_{\theta}^d)$ depends on the dimension of the kernel of $\theta.$ In the case $d=2,$ up to an orthogonal conjugation $\theta$ may be given as 
\begin{equation}\label{d=2}
    \theta=h\begin{pmatrix}
0 & -1\\
1 & 0
\end{pmatrix} 
\end{equation}
for some constant $h>0.$ In this case, $L^{\infty}(\mathbb{R}_{\theta}^2)$ is $*$-isomorphic to $B(L^{2}(\mathbb{R}))$ and this $*$-isomorphism can be written as 
$$U_{\theta}(t)\to e^{it_1\mathbf{M}_s +it_2 h \frac{d}{ds}},$$
where $\mathbf{M}_s\xi(s)=s\xi(s)$ and $\frac{d}{ds}\xi(s)=\xi'(s)$ is the differentiation.
If $d\geq 2,$ then an arbitrary $d\times d$ antisymmetric real matrix can be expressed (up to orthogonal conjugation) as a direct sum of a zero matrix and matrices of the form \eqref{d=2}, ultimately leading to the $*$-isomorphism
\begin{equation}\label{direct-sum}
    L^{\infty}(\mathbb{R}_{\theta}^d)\cong L^{\infty}(\mathbb{R}^{\dim(\ker(\theta))})\bar{\otimes}B(L^{2}(\mathbb{R}^{\text{rank}(\theta)/2})),
\end{equation}
where $\bar{\otimes}$ is the von Neumann tensor product \cite{MSX}. In particular, if $\det(\theta)\neq 0,$ then \eqref{direct-sum} reduces to
\begin{equation}\label{reduced-direct-sum}
    L^{\infty}(\mathbb{R}_{\theta}^d)\cong B(L^{2}(\mathbb{R}^{d/2})).
\end{equation}
Note that these formulas are meaningful since the rank of an anti-symmetric matrix is always even.
\subsection{Noncommutative integration} Let $f\in L^{1}(\mathbb{R}^d).$ Define $\lambda_{\theta}(f)$ as the operator defined by the formula
\begin{equation}\label{def-integration}
\lambda_{\theta}(f)\xi=\int_{\mathbb{R}^d}f(t)U_{\theta}(t)\xi dt, \quad \xi\in L^{2}(\mathbb{R}^d).
\end{equation}
This integral is absolutely convergent in the Bochner sense, and defines a bounded linear operator $\lambda_{\theta}(f): L^{2}(\mathbb{R}^d)\to L^{2}(\mathbb{R}^d)$ such that $\lambda_{\theta}(f)\in L^{\infty}(\mathbb{R}_{\theta}^d)$ (see \cite[Lemma 2.3]{MSX}). 
Let us denote by $\mathcal{S}(\mathbb{R}^d)$ the classical Schwartz space on $\mathbb{R}^d.$ For any $f\in\mathcal{S}(\mathbb{R}^d)$ we define the Fourier transform
$$\widehat{f}(t)=\int_{\mathbb{R}^d}f(s)e^{-i(t,s)}ds, \quad t\in \mathbb{R}^d.$$ The noncommutative Schwartz space $\mathcal{S}(\mathbb{R}_{\theta}^d)$ is defined as the image of the classical Schwartz space under $\lambda_{\theta},$ which is
$$\mathcal{S}(\mathbb{R}_{\theta}^d):=\{x\in L^{\infty}(\mathbb{R}_{\theta}^d):x=\lambda_{\theta}(f) \,\ \text{for some}\,\ f\in \mathcal{S}(\mathbb{R}^d)\}.$$
We define a topology on $\mathcal{S}(\mathbb{R}_{\theta}^d)$
 as the image of the canonical Fr\'{e}chet topology on $\mathcal{S}(\mathbb{R}^d)$ under $\lambda_{\theta}.$ 
 The topological dual of $\mathcal{S}(\mathbb{R}_{\theta}^d)$ will be denoted by $\mathcal{S}'(\mathbb{R}_{\theta}^d).$
The map $\lambda_{\theta}$ is injective \cite[Subsection 2.2.3]{MSX}, \cite{Mc}, and it can be extended to distributions. 
\subsection{Trace on $L^{\infty}(\mathbb{R}_{\theta}^d)$}
Given $f\in \mathcal{S}(\mathbb{R}^d),$ we
define the functional $\tau_{\theta}:\mathcal{S}(\mathbb{R}_{\theta}^d)\to \mathbb{C}$ by the formula
\begin{equation}\label{trace-def}
\tau_{\theta}(\lambda_{\theta}(f))=\tau_{\theta}\left(\int_{\mathbb{R}^d}f(\eta)U_{\theta}(\eta)d\eta\right):= f(0).
\end{equation}
Then, the functional $\tau_{\theta}$ admits an extension to a semifinite normal trace on $L^{\infty}(\mathbb{R}_{\theta}^d).$ Moreover, if $\theta=0,$ then $\tau_{\theta}$ is exactly the Lebesgue integral under a suitable isomorphism. If $\det(\theta)\neq0,$
then $\tau_{\theta}$ is (up to a normalisation) the operator trace on $B(L^2(\mathbb{R}^{d/2})).$ For more details, we refer to \cite{GJP}, \cite[Lemma 2.7]{MSX}, \cite[Theorem 2.6]{Mc}.

\subsection{Noncommutative $L^{p}(\mathbb{R}^{d}_{\theta})$ and $L^{p,q}(\mathbb{R}^{d}_{\theta})$ spaces}
With the definitions in the previous sections,  $L^{\infty}(\mathbb{R}^{d}_{\theta})$ is a semifinite von Neumann algebra with the trace $\tau_{\theta},$ and the pair $(L^{\infty}(\mathbb{R}^{d}_{\theta}),\tau_{\theta})$ is called  a noncommutative measure space. For any $1\leq p<\infty,$ we can define the $L^p$-norm on this space by the Borel functional calculus and the following formula:
$$\|x\|_{L^p(\mathbb{R}^d_{\theta})}=\Big(\tau_{\theta}(|x|^p)\Big)^{1/p},\quad x\in L^{\infty}(\mathbb{R}^{d}_{\theta}), $$
where $|x|:=(x^{*}x)^{1/2}.$
The completion of $\{x\in L^{\infty}(\mathbb{R}^{d}_{\theta}) :\|x\|_{p}<\infty\}$ with respect to $\|\cdot\|_{L^p(\mathbb{R}^d_{\theta})}$ is denoted by $L^p(\mathbb{R}^d_{\theta}).$ The elements of $L^p(\mathbb{R}^d_{\theta})$ are $\tau_{\theta}$-measurable operators like in the commutative case. These are linear densely defined closed (possibly unbounded) affiliated with $L^{\infty}(\mathbb{R}^{d}_{\theta})$ operators such that $\tau_{\theta}(\mathbf{1}_{(s,\infty)}(|x|))<\infty$ for some $s>0.$ Here, $\mathbf{1}_{(s,\infty)}(|x|)$ is the spectral projection with respect to the interval $(s,\infty).$ 
Let us denote the set of all 
$\tau_{\theta}$-measurable operators by $L^{0}(\mathbb{R}^{d}_{\theta}).$ 
Let $x=x^{\ast}\in L^{0}(\mathbb{R}^{d}_{\theta})$.  The {\it distribution function} of $x$ is defined by
$$n_x(s)=\tau_{\theta}\left(\mathbf{1}_{(s,\infty)}(x)\right), \quad -\infty<s<\infty.$$
For $x\in L^{0}(\mathbb{R}^{d}_{\theta}),$ the {\it generalised singular value function} $\mu(t, x)$ of $x$ is defined by
\begin{equation}\label{distribution-function}
\mu(t,x)=\inf\left\{s>0: n_{|x|}(s)\leq t\right\}, \quad t>0.
\end{equation}
The function $t\mapsto\mu(t,x)$ is decreasing and right-continuous. For more discussion on generalised singular value functions, we refer the reader to
\cite{FK, LSZ}.
The norm of $L^p(\mathbb{R}^d_{\theta})$ can also be written in terms of the generalised singular value function (see \cite[Example 2.4.2, p. 53]{LSZ}) as follows 
\begin{equation}\label{mu-norm}
\|x\|_{L^p(\mathbb{R}^d_{\theta})}=\left(\int_{0}^{\infty}\mu^{p}(s,x)ds\right)^{1/p}, \,\ \text{for} \,\ p<\infty\,\ \text{and} \,\,
\|x\|_{L^{\infty}(\mathbb{R}^d_{\theta})}=\mu(0,x), \,\, \text{for}\,\ p=\infty.
\end{equation}
The latter equality for $p=\infty,$ was proved in \cite[Lemma 2.3.12. (b), p. 50]{LSZ}.

The space $L^{0}(\mathbb{R}^{d}_{\theta})$ is a $*$-algebra, which can be made into a topological $*$-algebra as follows. Let
$$V(\varepsilon,\delta)=\{x\in L^{0}(\mathbb{R}^{d}_{\theta}): \mu(\varepsilon,x)\leq \delta\}.$$
Then $\{V(\varepsilon,\delta): \varepsilon,\delta>0\}$ is a system of neighbourhoods at 0 for which $L^{0}(\mathbb{R}^{d}_{\theta})$ becomes a metrizable topological $*$-algebra. The convergence with respect to this topology is called the {\it convergence in measure} \cite{PXu}. Next, we define the noncommutative Lorentz space associated with the NC Euclidean space.
\begin{definition} Let $1\leq p, q\leq \infty.$ Then, define the noncommutative Lorentz space $L^{p,q}(\mathbb{R}^{d}_{\theta})$ by
$$L^{p,q}(\mathbb{R}^{d}_{\theta}):=\{x\in L^{0}(\mathbb{R}^{d}_{\theta}):\|x\|_{L^{p,q}(\mathbb{R}^{d}_{\theta})}<\infty\},$$
where
\begin{equation}\label{NC Lorentz space norm I}
\|x\|_{L^{p,q}(\mathbb{R}^{d}_{\theta})}:=\left(\int_{0}^{+\infty}\left(t^\frac{1}{p}\mu(t,x)\right)^{q}\frac{dt}{t}\right)^{\frac1q}\,\ \text{for} \,\ q<\infty,
\end{equation}
and 
$$\|x\|_{L^{p,\infty}(\mathbb{R}^{d}_{\theta})}:=\sup_{t>0}t^{\frac{1}{p}}\mu(t,x) \,\ \text{for} \,\ q=\infty.$$
In other words,
\begin{equation}\label{mu norm}
\|x\|_{L^{p,q}(\mathbb{R}^{d}_{\theta})}=\|\mu(x)\|_{L^{p,q}(\mathbb{R}_+)}.
\end{equation}
In particular,
$L^{p,p}(\mathbb{R}^{d}_{\theta})=L^{p}(\mathbb{R}^{d}_{\theta})$ isometrically for $1\leq p\leq\infty$ with equivalent norms.
\end{definition}
These are noncommutative quasi-Banach spaces (resp. Banach for $1\leq p\leq \infty,$ $1\leq q<\infty$). For the theory of $L^p$ and Lorentz spaces corresponding to general semifinite von Neumann algebras, we refer the reader to  \cite{DPS}, \cite{LSZ}, \cite{PXu}.

\subsection{Differential calculus on $L^{\infty}(\mathbb{R}^{d}_{\theta})$} 
Now let us recall the differential structure on $\mathbb{R}^d_{\theta}$ (see, \cite[Subsection 2, p. 10]{Mc}).

It  is based on the group of translations $\{T_s\}_{s\in\mathbb{R}^d},$ where $T_s$ is defined as the unique $\ast$-automorphism of $L^{\infty}(\mathbb{R}^d_\theta)$ which acts on $U_{\theta}(t)$ as
\begin{equation}\label{def-translations}
T_s(U_{\theta}(t))=e^{i(t,s)}U_{\theta}(t), \quad  
t,s \in\mathbb{R}^d, 
\end{equation}
where $(\cdot,\cdot)$ means the usual inner product in $\mathbb{R}^d.$ 

Equivalently, for $x \in L^{\infty}(\mathbb{R}^d_\theta) \subseteq B(L^2(\mathbb{R}^d_\theta))$ we may define $T_s(x)$ as the conjugation of $x$ by the
unitary operator of translation by $s$  on $L^2(\mathbb{R}^d_\theta).$   
\begin{definition} (see, \cite[Definition 2.9]{Mc}) An element $x \in L^{1}(\mathbb{R}^d_\theta) + L^{\infty}(\mathbb{R}^d_\theta)$   is said to be smooth if for all $y \in L^{1}(\mathbb{R}^d_\theta) \cap L^{\infty}(\mathbb{R}^d_\theta)$ the function $s \mapsto \tau_{\theta}(yT_s(x))$ is smooth.
\end{definition}

The partial derivations $\partial^{\theta}_j,  j = 1, \dots, d,$ are defined on smooth elements $x$ by 
$$
\partial^{\theta}_j(x) = \frac{d}{ds_{j}}T_s(x)|_{s=0}. 
$$
From \eqref{def-translations} and \eqref{def-integration} it is easily verified that 
$$
\partial^{\theta}_j(x)\overset{\eqref{def-integration}}{=}\partial^{\theta}_j\lambda_{\theta}(f)\overset{\eqref{def-translations}}{=}\lambda_{\theta}(it_{j}f(t)), \quad  j=1,\cdots,d, \,\  f\in \mathcal{S}(\mathbb{R}^d),
$$
for $x=\lambda_{\theta}(f).$

For a multi-index $\alpha=(\alpha_1,...,\alpha_d),$ we define 
$$
\partial^{\alpha}_{\theta}=(\partial^{\theta}_{1})^{\alpha_{1}}\dots(\partial^{\theta}_{d})^{\alpha_{d}}, 
$$
and the gradient $\nabla_{\theta}$ associated with $L^{\infty}(\mathbb{R}_{\theta}^d)$ is the operator
$$
\nabla_{\theta}=(\partial^{\theta}_{1},  \dots,  \partial^{\theta}_{d}).  
$$
Moreover, the Laplace operator $\Delta_{\theta}$ is defined as
\begin{equation}\label{laplacian}
\Delta_{\theta} = (\partial_1^{\theta})^2   + \cdots +(\partial_d^{\theta})^2, 
\end{equation}
where  $-\Delta_{\theta}$ is a positive operator  on $L^2(\mathbb{R}^{d}_\theta)$ (see  \cite {MSX} and \cite{Mc}).

By duality, we can extend the derivatives $\partial^{\alpha}_{\theta}$ to operators on $\mathcal{S}'(\mathbb{R}^d).$ With these generalised derivatives, we are able to introduce the Sobolev spaces $L^p_s (\mathbb{R}^d)$ associated to the non-commutative Euclidean space.  For this we will have need to refer to the operator $ J_{\theta}= (1-\Delta_{\theta})^{\frac{1}{2}}.$ As in the classical case, the operator $J_{\theta}$ will be called the Bessel potential.

\begin{definition} (see, \cite[Definition 3.12.]{Mc}) For $1 \leq  p <\infty$ and $s\in\mathbb{R}, $ define the Bessel potential Sobolev space $L_{s}^p(\mathbb{R}^{d}_\theta)$ as the subset of $x\in\mathcal{S}'(\mathbb{R}^d)$ such that $J_{\theta}^s  x \in L^p(\mathbb{R}^{d}_\theta),$ with the norm
$$
\|x\|_{L_{s}^p(\mathbb{R}^{d}_\theta)}=\|J_{\theta}^sx\|_{L^p(\mathbb{R}^{d}_\theta)}. 
$$
\end{definition}
For a positive integer $m$ and $1 \leq p \leq \infty,$ the space $W^{p,m} (\mathbb{R}^{d}_\theta)$ is the space of $x\in\mathcal{S}'(\mathbb{R}^d)$ such that every partial derivative of $x$ up to order $m$ is in $L^p(\mathbb{R}^{d}_\theta),$ equipped with the norm
$$
\|x\|_{W^{p,m} (\mathbb{R}^{d}_\theta)}=\sum\limits_{ |\alpha|\leq m} \|\partial^{\alpha}_{\theta}x\|_{L^p(\mathbb{R}^{d}_\theta)}. 
$$

The above definitions of Sobolev spaces on quantum Euclidean spaces and their main properties   were studied in \cite[Section 3]{MSX}.

\subsection{Fourier transform and Fourier multiplier on quantum  Euclidean spaces} 

\begin{definition}\label{F-transform}
For any $x \in \mathcal{S}(\mathbb{R}_{\theta}^d),$ we define the Fourier transform of $x$ as the map $\lambda_{\theta}^{-1}:\mathcal{S}(\mathbb{R}_{\theta}^d)\to \mathcal{S}(\mathbb{R}^d)$ by the formula
\begin{equation}\label{direct-F-transform}
\lambda_{\theta}^{-1}(x):=\widehat{x}, \quad \widehat{x}(s)=\tau_{\theta}(xU_{\theta}(s)^*), \,\ s\in \mathbb{R}^d.
\end{equation}
\end{definition}

We will use the following quantum Euclidean analogues of the Hausdorff-Young inequality for the Fourier and inverse Fourier transforms in \cite[Lemma 2.4]{HLW} and \cite[Proposition 2.10]{MSX}, respectively.
\begin{lem}\cite[Lemma 2.4]{HLW}  Let $1\leq p\leq 2$ with $\frac{1}{p}+\frac{1}{p'}=1.$ Then the Fourier transform $\lambda_{\theta}^{-1}$ can be extended to a linear contraction from $L^{p}(\mathbb{R}^{d}_{\theta})$ to $L^{p'}(\mathbb{R}^{d})$ and we have \begin{equation}\label{direct-H-Y_ineq}
\|\widehat{x}\|_{L^{p'}(\mathbb{R}^{d})}\leq \|x\|_{L^{p}(\mathbb{R}^{d}_{\theta})}, \quad x\in \mathcal{S}(\mathbb{R}_{\theta}^d).
\end{equation}
  Moreover, 
  it becomes equality when $p=2,$ giving the Parseval (or Plancherel) identity
  \begin{equation}\label{Parseval}
\|\widehat{x}\|_{L^{2}(\mathbb{R}^{d})}= \|x\|_{L^{2}(\mathbb{R}^{d}_{\theta})}.
\end{equation}
\end{lem}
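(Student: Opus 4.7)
The plan is to establish the quantum Hausdorff--Young inequality via the standard two-endpoint strategy: prove the estimate at the endpoints $p=1$ (trivial estimate) and $p=2$ (Plancherel), and then invoke complex interpolation for noncommutative $L^p$-spaces.

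First, for the $p=1$ endpoint, I would start from the very definition \eqref{direct-F-transform}. For $x\in\mathcal{S}(\mathbb{R}_{\theta}^d)$ and any $s\in\mathbb{R}^d$, the noncommutative H\"older inequality gives
$$
|\widehat{x}(s)|=|\tau_{\theta}(xU_{\theta}(s)^{*})|\leq \|x\|_{L^{1}(\mathbb{R}^{d}_{\theta})}\,\|U_{\theta}(s)^{*}\|_{L^{\infty}(\mathbb{R}^{d}_{\theta})}.
$$
Since $U_{\theta}(s)$ is unitary by Definition \ref{NC_E_space1}, its operator norm equals one, and taking the supremum over $s$ yields $\|\widehat{x}\|_{L^{\infty}(\mathbb{R}^d)}\leq \|x\|_{L^{1}(\mathbb{R}^{d}_{\theta})}$. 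This establishes the contraction $\lambda_{\theta}^{-1}:L^{1}(\mathbb{R}^{d}_{\theta})\to L^{\infty}(\mathbb{R}^{d})$.

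Second, for the $p=2$ endpoint I would establish the Parseval identity \eqref{Parseval} directly for $x=\lambda_{\theta}(f)$, $f\in\mathcal{S}(\mathbb{R}^d)$. A short computation using the Weyl relation \eqref{weyl-relation} shows that $\lambda_{\theta}(f)^{*}\lambda_{\theta}(g)=\lambda_{\theta}(\tilde{f}\ast_{\theta}g)$ for a suitable twisted convolution $\ast_{\theta}$, where $\tilde{f}(t)=\overline{f(-t)}$. Evaluating the defining trace \eqref{trace-def} at the origin and exploiting the antisymmetry of $\theta$, which forces the phase $e^{\frac{i}{2}(t,\theta t)}$ to collapse to $1$ at the diagonal, one obtains $\tau_{\theta}(\lambda_{\theta}(f)^{*}\lambda_{\theta}(f))=\int_{\mathbb{R}^d}|f(t)|^{2}\,dt$. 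A parallel calculation using \eqref{weyl-relation} and \eqref{trace-def} gives $\widehat{\lambda_{\theta}(f)}=f$, so that both sides of \eqref{Parseval} equal $\|f\|_{L^{2}(\mathbb{R}^d)}$, yielding the identity on the dense subspace $\mathcal{S}(\mathbb{R}_{\theta}^d)\subset L^{2}(\mathbb{R}^{d}_{\theta})$.

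Third, with the two endpoint bounds in hand, I would apply the Riesz--Thorin interpolation theorem for noncommutative $L^p$-spaces (see \cite{PXu}) to the analytic family obtained from $\lambda_{\theta}^{-1}$ on the strip $\{0\leq \operatorname{Re} z\leq 1/2\}$. Since the interpolation scale $(L^{1}(\mathbb{R}^{d}_{\theta}),L^{2}(\mathbb{R}^{d}_{\theta}))$ is compatible (both built from the semifinite trace $\tau_{\theta}$) and the target scale $(L^{\infty}(\mathbb{R}^d),L^{2}(\mathbb{R}^d))$ is classical, the contraction constants $1$ at the endpoints interpolate to a contraction $L^{p}(\mathbb{R}^{d}_{\theta})\to L^{p'}(\mathbb{R}^{d})$ for every $1\leq p\leq 2$, which is precisely \eqref{direct-H-Y_ineq}. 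The extension to all of $L^{p}(\mathbb{R}^{d}_{\theta})$ follows by density of $\mathcal{S}(\mathbb{R}_{\theta}^d)$.

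The main obstacle is the $p=2$ case: one must identify $\lambda_{\theta}^{-1}$ as a genuine inverse of $\lambda_{\theta}$ on the Schwartz level and confirm that $\lambda_{\theta}$ is isometric from $L^{2}(\mathbb{R}^d)$ onto $L^{2}(\mathbb{R}^{d}_{\theta})$. This requires careful bookkeeping of the twisted convolution, the distributional interpretation of $\tau_{\theta}(U_{\theta}(\cdot))$ as the Dirac mass at $0$ implied by \eqref{trace-def}, and the cancellation provided by the antisymmetry of $\theta$. Once this is done, the $p=1$ bound and the interpolation step are routine.
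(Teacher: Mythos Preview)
The paper does not actually prove this lemma; it is quoted directly from \cite[Lemma 2.4]{HLW} (with the isometric $p=2$ case also attributed to \cite[Proposition 2.10]{MSX}), so there is no in-paper argument to compare against. Your proposed proof is the standard and correct route, and in fact the paper independently carries out the key computation you need for the $p=2$ endpoint: in the proof of Lemma~\ref{P-relation} it is shown that $\widehat{\lambda_{\theta}(f)}=f$ for $f\in\mathcal{S}(\mathbb{R}^d)$, using exactly the antisymmetry cancellation $(s,\theta s)=0$ you anticipate, and from this the Plancherel identity follows at once since $\tau_{\theta}(xx^{*})=\int|\widehat{x}|^{2}$. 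Your $p=1$ endpoint and the Riesz--Thorin interpolation (valid for semifinite noncommutative $L^p$-spaces as in \cite{PXu}) are routine, so the argument goes through without gaps.
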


\begin{lem}\cite[Proposition 2.10]{MSX} Let $1\leq p\leq 2$ with $\frac{1}{p}+\frac{1}{p'}=1.$ Then $\lambda_{\theta}$ has a continuous extension from $L^{p}(\mathbb{R}^{d})$ to $L^{p'}(\mathbb{R}^{d}_{\theta}),$ and we have 
\begin{equation}\label{H-Y_ineq}
\|\lambda_{\theta}(f)\|_{L^{p'}(\mathbb{R}^{d}_{\theta})}\leq \|f\|_{L^{p}(\mathbb{R}^{d})},\quad f\in L^{1}(\mathbb{R}^{d})\cap  L^{p}(\mathbb{R}^{d}).
\end{equation}
\end{lem}
In particular, we have Plancherel's (Parseval's) identity
\begin{equation}\label{Plancherel}
\|\lambda_{\theta}(f)\|_{L^{2}(\mathbb{R}^{d}_{\theta})}=\|f\|_{L^{2}(\mathbb{R}^{d})},\quad f\in L^{2}(\mathbb{R}^{d}).
\end{equation}
Throughout this paper, we call $\lambda_{\theta}^{-1}(\cdot)$ and $\lambda_{\theta}(\cdot)$ as the noncommutative Fourier and inverse Fourier transforms on the quantum Euclidean space.
\begin{rem}\label{Remark} We have  the following relations
\begin{itemize}
\item[(i)] $\lambda_{\theta}(\widehat{x})=x,\quad x\in L^{2}(\mathbb{R}_{\theta}^{d})$;
\item[(ii)]  $\widehat{\lambda_{\theta}(f)}=f, \quad f\in L^2(\mathbb{R}^{d})$.
\end{itemize}    
\end{rem}

\begin{definition}\label{F-multiplier}Let $g:\mathbb{R}^d\to \mathbb{C}$ be a measurable function from Schwartz class. Then we define the noncommutative Fourier multiplier $g(D)$ on $\mathcal{S}(\mathbb{R}_{\theta}^d)$ by the formula 
\begin{equation}\label{Fourier-multiplier}
g(D)(x)=\int_{\mathbb{R}^d}g(s)\widehat{x}(s)U_{\theta}(s)ds.
\end{equation}
    
\end{definition}

Similarly, we can extend the definition of $g(D)$ to 
$\mathcal{S}'(\mathbb{R}_{\theta}^d)$ by the relation
$$(g(D)(F),x)=(F,g(D)(x)), \quad x\in \mathcal{S}(\mathbb{R}_{\theta}^d), F\in \mathcal{S}'(\mathbb{R}_{\theta}^d).$$

\begin{definition}\label{def-adj}
Let $1 \leq p,q \leq \infty$.  For a bounded linear operator $A:L^p(\mathbb{R}^d_\theta)\to L^q(\mathbb{R}^d_\theta)$ we denote by $A^*:L^{q'}(\mathbb{R}^d_\theta)\to L^{p'}(\mathbb{R}^d_\theta)$  its adjoint operator defined by
\begin{eqnarray}\label{def-dul-operator}
 (A(x),y):=\tau_{\theta}(A(x) y^*)=\tau_{\theta}(x(A^*(y))^*)=(x,A^*(y)),    
 \end{eqnarray} 
for all $x\in L^p(\mathbb{R}^d_\theta)$ and $y\in L^{q'}(\mathbb{R}^d_\theta)$ (or in a dense subspace of it).  
\end{definition}

We write $\mathcal{A}\lesssim \mathcal{B}$ if there is a constant $c> 0$ depending only on parameters of spaces such that
$\mathcal{A}\leq c\mathcal{B}.$ We write $\mathcal{A}\asymp \mathcal{B}$ if both $\mathcal{A}\lesssim \mathcal{B}$ and $\mathcal{A}\gtrsim \mathcal{B}$ hold, possibly with different
constants.

\section{Hausdorff–Young–Paley Inequality on the quantum Euclidean space}\label{sc4}

In this section, we give an analogue of Hausdorff–Young–Paley inequality on the quantum Euclidean space. 
The following Plancherel (or Parseval) relation is necessary. 
\begin{lem} \label{P-relation}For any $x,y\in \mathcal{S}(\mathbb{R}^d_{\theta}),$ we have the equality
 \begin{eqnarray}\label{Plancherel-relation}
  \tau_{\theta}(xy^*)=\int_{\mathbb{R}^d}\widehat{x}(s)\overline{\widehat{y}(s)}ds. 
 \end{eqnarray}
\end{lem}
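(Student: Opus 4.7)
The plan is to deduce the identity by polarising the Plancherel identity \eqref{Parseval} that is already in hand. I would begin by recalling that the noncommutative $L^{2}$-space $L^{2}(\mathbb{R}^{d}_{\theta})$ is a Hilbert space whose inner product, arising from the trace $\tau_{\theta}$ via the GNS construction, is given by $\langle x,y\rangle_{L^{2}(\mathbb{R}^{d}_{\theta})}=\tau_{\theta}(xy^{\ast})$, with corresponding norm square $\tau_{\theta}(xx^{\ast})=\tau_{\theta}(x^{\ast}x)=\|x\|_{L^{2}(\mathbb{R}^{d}_{\theta})}^{2}$ (the last equality by the trace property). Since $\mathcal{S}(\mathbb{R}^{d}_{\theta})\subset L^{p}(\mathbb{R}^{d}_{\theta})$ for every $1\leq p\leq\infty$, the product $xy^{\ast}$ lies in $L^{1}(\mathbb{R}^{d}_{\theta})$ for $x,y\in\mathcal{S}(\mathbb{R}^{d}_{\theta})$, so both sides of \eqref{Plancherel-relation} are well-defined.

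Next I would invoke the complex polarisation identity
\[
\langle x,y\rangle_{L^{2}(\mathbb{R}^{d}_{\theta})}
=\frac{1}{4}\sum_{k=0}^{3} i^{k}\bigl\|x+i^{k}y\bigr\|_{L^{2}(\mathbb{R}^{d}_{\theta})}^{2}.
\]
The map $\lambda_{\theta}^{-1}:x\mapsto \widehat{x}$ is $\mathbb{C}$-linear on $\mathcal{S}(\mathbb{R}^{d}_{\theta})$, as is immediate from \eqref{direct-F-transform}, and since $\mathcal{S}(\mathbb{R}^{d}_{\theta})$ is a vector space, each term $x+i^{k}y$ still belongs to $\mathcal{S}(\mathbb{R}^{d}_{\theta})$. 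Therefore the Plancherel identity \eqref{Parseval} applies to each summand, yielding
\[
\bigl\|x+i^{k}y\bigr\|_{L^{2}(\mathbb{R}^{d}_{\theta})}^{2}
=\bigl\|\widehat{x}+i^{k}\widehat{y}\bigr\|_{L^{2}(\mathbb{R}^{d})}^{2}.
\]
Polarising back in $L^{2}(\mathbb{R}^{d})$ then gives
\[
\tau_{\theta}(xy^{\ast})
=\frac{1}{4}\sum_{k=0}^{3}i^{k}\bigl\|\widehat{x}+i^{k}\widehat{y}\bigr\|_{L^{2}(\mathbb{R}^{d})}^{2}
=\int_{\mathbb{R}^{d}}\widehat{x}(s)\overline{\widehat{y}(s)}\,ds,
\]
which is \eqref{Plancherel-relation}.

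There is no real obstacle here; the only point that deserves attention is the identification of the $L^{2}(\mathbb{R}^{d}_{\theta})$-inner product with $\tau_{\theta}(xy^{\ast})$, which follows from the general theory of noncommutative $L^{p}$-spaces over semifinite von Neumann algebras recalled in Section 2. As an alternative one could argue directly: writing $x=\lambda_{\theta}(f)$, $y=\lambda_{\theta}(g)$ with $f,g\in\mathcal{S}(\mathbb{R}^{d})$, using $U_{\theta}(s)^{\ast}=U_{\theta}(-s)$ together with the Weyl relation \eqref{weyl-relation} to expand $xy^{\ast}$ as $\lambda_{\theta}$ of a twisted convolution of $f$ with $\overline{g(-\cdot)}$, and then applying the trace formula \eqref{trace-def} to obtain $\int f(s)\overline{g(s)}\,ds$, which by $\widehat{\lambda_{\theta}(f)}=f$ equals the right-hand side of \eqref{Plancherel-relation}. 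The polarisation approach, however, avoids this explicit manipulation and is the route I would choose to present.
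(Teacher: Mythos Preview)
Your polarisation argument is correct. The Plancherel identity \eqref{Parseval} is already in hand from \cite{HLW}, the Fourier transform $\lambda_{\theta}^{-1}$ is $\mathbb{C}$-linear, and the identification of the $L^{2}(\mathbb{R}^{d}_{\theta})$-inner product with $\tau_{\theta}(xy^{\ast})$ follows from the trace property, so the four-term polarisation goes through exactly as you wrote.

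The paper, however, does not argue this way: it takes precisely the ``alternative'' route you sketch at the end. It writes $x=\lambda_{\theta}(f)$, computes $\widehat{x}(s)$ explicitly from the Weyl relation and the trace formula \eqref{trace-def} to obtain $\widehat{\lambda_{\theta}(f)}=f$, derives the inversion formula $x=\int_{\mathbb{R}^{d}}\widehat{x}(s)U_{\theta}(s)\,ds$ and its adjoint version for $y^{\ast}$, and then evaluates $\tau_{\theta}(xy^{\ast})$ directly. Your approach is shorter and conceptually cleaner, since it reduces everything to an isometry plus polarisation. The paper's direct computation is more laborious but has a concrete payoff: the intermediate identities $\widehat{\lambda_{\theta}(f)}=f$ and the Fourier inversion formula are recorded and reused later (for instance in the proof of Lemma~\ref{F_trans_Fourier_trans}). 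If you adopt the polarisation proof, you would need to establish $\widehat{\lambda_{\theta}(f)}=f$ separately when it is next needed.
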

\begin{proof}   Let $x\in \mathcal{S}(\mathbb{R}^d_{\theta}).$ Then, by definition of $\mathcal{S}(\mathbb{R}^d_{\theta})$ there exists $f\in \mathcal{S}(\mathbb{R}^d)$ such that $x=\lambda_{\theta}(f).$ In other words, we have
\begin{equation}\label{*}
 x=\int_{\mathbb{R}^d}f(s)U_{\theta}(s)ds:= \lambda_{\theta}(f), \quad f\in  \mathcal{S}(\mathbb{R}^d).
\end{equation}
 It follows that
 \begin{eqnarray*} 
 \widehat{x}(s)&=&\tau_{\theta}(xU_{\theta}(s)^*)=\tau_{\theta}(xU_{\theta}(-s))\overset{\eqref{*}}{=}\tau_{\theta}\left(\int_{\mathbb{R}^d}f(\xi)U_{\theta}(\xi)U_{\theta}(-s)d\xi\right)\nonumber\\
 &\overset{\eqref{weyl-relation}}{=}&\tau_{\theta}\left(\int_{\mathbb{R}^d}f(\xi)e^{-\frac{i}{2}(\xi,\theta s)}U_{\theta}(\xi-s)d\xi\right)\nonumber\\
 &\overset{\xi-s=\eta}{=}&\tau_{\theta}\left(\int_{\mathbb{R}^d}f(\eta +s)e^{-\frac{i}{2}(\eta+s,\theta s)}U_{\theta}(\eta)d\eta\right)\nonumber\\ &=&\tau_{\theta}\left(\int_{\mathbb{R}^d}f(\eta +s)e^{-\frac{i}{2}(\eta,\theta s)}e^{-\frac{i}{2}(s,\theta s)}U_{\theta}(\eta)d\eta\right)\nonumber\\
 &=&\tau_{\theta}\left(\int_{\mathbb{R}^d}f(\eta +s)e^{-\frac{i}{2}(\eta,\theta s)}U_{\theta}(\eta)d\eta\right)\nonumber\\
 &=&\tau_{\theta}\left(\lambda_{\theta}(f(\cdot +s)e^{-\frac{i}{2}(\cdot,\theta s)})\right)\nonumber\\
 &\overset{\eqref{trace-def}}{=}&f(0+s)e^{-\frac{i}{2}(0,\theta s)}=f(s).
 \end{eqnarray*} 
Here, in the fourth line we used the fact that $(s,\theta s)=0.$ Indeed, since $\theta$ is the anti-symmetric matrix, it follows that $(\theta s,t)=-(s,\theta t)$ for any $s,t\in \mathbb{R}^d.$ In particular, $(\theta s,s)+(s,\theta s)=0, s\in \mathbb{R}^d,$ which implies $2(s,\theta s)=0, s\in \mathbb{R}^d.$ Consequently, $(s,\theta s)=0.$
 So, we have 
 \begin{equation}\label{**}
 \widehat{x}(s)=f(s), \quad f\in  \mathcal{S}(\mathbb{R}^d),  \,\ s\in\mathbb{R}^d. 
\end{equation}
By using this, we obtain 
$$\int_{\mathbb{R}^d}\widehat{x}(s)U_{\theta}(s)ds\overset{\eqref{**}}{=}\int_{\mathbb{R}^d}f(s)U_{\theta}(s)ds=\lambda_{\theta}(f)=x.$$
In other words,
\begin{eqnarray}\label{fourier transform}
x=\int_{\mathbb{R}^d}\widehat{x}(s)U_{\theta}(s)ds, \quad x\in\mathcal{S}(\mathbb{R}_{\theta}^d).
\end{eqnarray}
Therefore, 
\begin{eqnarray}\label{conjugate-fourier transform}
x^*\overset{\eqref{**}}{=}\left(\int_{\mathbb{R}^d} \widehat{x}(s)U_{\theta}(s)ds\right)^*=\int_{\mathbb{R}^d}\overline{\widehat{x}(s)}U^*_{\theta}(s)ds, \quad x\in\mathcal{S}(\mathbb{R}_{\theta}^d).
\end{eqnarray}
Thus, we have
\begin{eqnarray*} 
\tau_{\theta}(xy^*)&\overset{\eqref{conjugate-fourier transform}}{=}&\tau_{\theta}\left(\int\limits_{\mathbb{R}^d}f(s)U_{\theta}(s)ds \int\limits_{\mathbb{R}^d}\overline{\widehat{y}(t)} U^*_{\theta}(t)dt\right)\\
&=&\tau_{\theta}\left(\int\limits_{\mathbb{R}^d}\left(\int\limits_{\mathbb{R}^d}f(s)U_{\theta}(s)U_{\theta}(-t)ds\right)\overline{\widehat{y}(t)}dt\right)\\
&=&\int\limits_{\mathbb{R}^d}\tau_{\theta}\left(\int\limits_{\mathbb{R}^d}f(s)U_{\theta}(s) U_{\theta}(-t)ds \right)\overline{\widehat{y}(t)}dt \\
&\overset{\eqref{fourier transform}}{=}&\int\limits_{\mathbb{R}^d}\widehat{x}(t)\overline{\widehat{y}(t)}dt,\quad  \forall x,y\in \mathcal{S}(\mathbb{R}^d_{\theta}),
 \end{eqnarray*}  
which is the desired result. 
\end{proof}

Next, we prove that inequality \eqref{direct-H-Y_ineq} remains true with the reversed inequality for $2\leq p\leq \infty.$

\begin{prop}\label{R-H-Y_ineq} If $2\leq p\leq\infty$ and if $\widehat{x}\in L^{p'}(\mathbb{R}^d),$ then the operator $x$ defined by $\widehat{x}$ belongs to $L^{p}(\mathbb{R}^d_\theta),$ and we have
 \begin{eqnarray}\label{R_H-Y_ineq}
\|x\|_{L^p(\mathbb{R}^d_\theta)} \leq \|\widehat{x}\|_{L^{p'}(\mathbb{R}^d )}, 
\end{eqnarray}
where $\frac{1}{p}+\frac{1}{p'}=1.$
 \end{prop}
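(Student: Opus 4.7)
The plan is to derive \eqref{R_H-Y_ineq} by duality from the direct Hausdorff--Young inequality \eqref{direct-H-Y_ineq}, using the Plancherel relation \eqref{Plancherel-relation} of Lemma \ref{P-relation} to identify the duality. Equivalently, one can interpolate between the endpoints $p=2$ and $p=\infty$. Both routes work; the duality proof is cleaner, but the endpoint/interpolation proof is the most elementary, so I outline both.

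\textbf{Endpoints.} For $p=2$ the bound is an equality by Plancherel \eqref{Parseval}. For $p=\infty$, apply the reconstruction formula \eqref{fourier transform} established inside the proof of Lemma \ref{P-relation}, namely $x=\int_{\mathbb{R}^d}\widehat{x}(s)U_{\theta}(s)\,ds=\lambda_\theta(\widehat{x})$. Since each $U_\theta(s)$ is unitary on $L^2(\mathbb{R}^d)$, the defining integral \eqref{def-integration} yields, for every $\xi\in L^2(\mathbb{R}^d)$,
$$\|x\xi\|_{L^2(\mathbb{R}^d)}\le\int_{\mathbb{R}^d}|\widehat{x}(s)|\,\|U_\theta(s)\xi\|_{L^2(\mathbb{R}^d)}\,ds=\|\widehat{x}\|_{L^1(\mathbb{R}^d)}\|\xi\|_{L^2(\mathbb{R}^d)},$$
so $\|x\|_{L^\infty(\mathbb{R}^d_\theta)}\le\|\widehat{x}\|_{L^1(\mathbb{R}^d)}$. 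Complex interpolation for semifinite noncommutative $L^p$-spaces (see \cite{PXu,LSZ}) applied to the map $g\mapsto\lambda_\theta(g)$ then delivers the claimed contraction at every intermediate pair $(p,p')$ with $2\le p\le\infty$.

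\textbf{Duality.} Alternatively, using \eqref{Plancherel-relation} one checks on Schwartz classes that, with respect to the pairings $\langle x,y\rangle=\tau_\theta(xy^*)$ and $\langle f,g\rangle=\int f\overline{g}\,ds$, the map $g\mapsto\lambda_\theta(g)$ is the Banach-space adjoint of the Fourier transform $\mathcal{F}:L^{p'}(\mathbb{R}^d_\theta)\to L^p(\mathbb{R}^d)$. Since \eqref{direct-H-Y_ineq} asserts that $\mathcal{F}$ is a contraction for $1\le p'\le 2$, its adjoint $\lambda_\theta:L^{p'}(\mathbb{R}^d)\to L^p(\mathbb{R}^d_\theta)$ is also a contraction, which is precisely \eqref{R_H-Y_ineq}.

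\textbf{Main obstacle.} The technical point to watch is ensuring that the reconstruction $x=\lambda_\theta(\widehat{x})$ extends by density so as to remain valid for all $\widehat{x}\in L^{p'}(\mathbb{R}^d)$ rather than only Schwartz data, and that the adjoint of $\mathcal{F}$ is correctly identified with the claimed map, since the complex conjugations in \eqref{Plancherel-relation} require a brief bookkeeping. Both are routine once the Schwartz-level identities and the standard noncommutative duality $L^{p'}(\mathbb{R}^d_\theta)^*=L^p(\mathbb{R}^d_\theta)$ are in hand.
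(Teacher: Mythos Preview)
Your duality argument is precisely the paper's approach: the paper writes $\|x\|_{L^p(\mathbb{R}^d_\theta)}=\sup_{\|y\|_{p'}=1}|\tau_\theta(xy^*)|$, invokes the Plancherel relation \eqref{Plancherel-relation} to rewrite this as $\sup|\int\widehat{x}\,\overline{\widehat{y}}\,|$, applies the classical H\"older inequality, and then the direct Hausdorff--Young bound \eqref{direct-H-Y_ineq} on $\widehat{y}$---which is exactly the explicit verification that $\lambda_\theta$ is the adjoint of the contraction $\lambda_\theta^{-1}$.

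Your endpoint-plus-interpolation route is a correct alternative the paper does not take. It trades the elementary H\"older step for a citation to complex interpolation of noncommutative $L^p$-spaces; this gives a direct treatment of the endpoint $p=\infty$ (which the paper's duality argument, relying on $L^{p'}$-duality and on \eqref{direct-H-Y_ineq} for $y$, does not literally cover since one would need a predual at $p'=1$), but at the cost of invoking a heavier external result. The paper's version is entirely self-contained within the tools already set up in Section~\ref{sc4}.
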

\begin{proof}By duality of $L^p(\mathbb{R}^{d}_{\theta})$ we have
  $$\|x\|_{L^p(\mathbb{R}^{d}_{\theta})}=\sup\left\{\left|\tau_{\theta}(xy^*)\right|:y\in L^{p'}(\mathbb{R}^{d}_{\theta}),\quad
  \|y\|_{L^{p'}(\mathbb{R}^{d}_{\theta})}=1\right\}.$$
  By Lemma \ref{P-relation} we have
  $$
  \tau_{\theta}(xy^*)=\int_{\mathbb{R}^d}\widehat{x}(s)\overline{\widehat{y}(s)}ds, \quad x,y\in \mathcal{S}(\mathbb{R}^d_{\theta}).
  $$
  Since $|\widehat{x}(s)\overline{\widehat{y}(s)}|\leq |\widehat{x}(s)||\overline{\widehat{y}(s)}|$ for any $s\in\mathbb{R}^d,$ applying the classical H\"{o}lder inequality with respect to Fourier transforms of $x$ and $y\in L^{p'}(\mathbb{R}^{d}_{\theta})$ with $\|y\|_{L^{p'}(\mathbb{R}^{d}_{\theta})}=1,$ we obtain
  \begin{eqnarray*}
\|x\|_{L^p(\mathbb{R}^d_\theta)}&=&\sup\{|\tau_{\theta}(xy^*)|:y\in L^{p'}(\mathbb{R}^{d}_{\theta}),\quad
  \|y\|_{L^{p'}(\mathbb{R}^{d}_{\theta})}=1\}\\
&=&\sup\left\{\left|\int_{\mathbb{R}^d}\widehat{x}(s)\overline{\widehat{y}(s)}ds\right|:y\in L^{p'}(\mathbb{R}^{d}_{\theta}),\quad
  \|y\|_{L^{p'}(\mathbb{R}^{d}_{\theta})}=1\right\}\\
&\leq &\sup\left\{\int_{\mathbb{R}^d}|\widehat{x}(s)\overline{\widehat{y}(s)}|ds:y\in L^{p'}(\mathbb{R}^{d}_{\theta}),\quad
  \|y\|_{L^{p'}(\mathbb{R}^{d}_{\theta})}=1\right\}\\
&\leq &\sup\left\{\int_{\mathbb{R}^d}|\widehat{x}(s)||\widehat{y}(s)|ds:y\in L^{p'}(\mathbb{R}^{d}_{\theta}),\quad
  \|y\|_{L^{p'}(\mathbb{R}^{d}_{\theta})}=1\right\} \\
    &\leq&\sup_{y\in L^{p'}(\mathbb{R}^{d}_{\theta})\atop
  \|y\|_{L^{p'}(\mathbb{R}^{d}_{\theta})}=1}\left
  \{\left(\int_{\mathbb{R}^d}|\widehat{x}(s)|^{p'}ds\right)^{1/p'}\cdot\left(\int_{\mathbb{R}^d}|\widehat{y}(s)|^{p}ds\right)^{1/p}\right\}\\
    &=&\sup_{y\in L^{p'}(\mathbb{R}^{d}_{\theta})\atop
  \|y\|_{L^{p'}(\mathbb{R}^{d}_{\theta})}=1}\left\{ \|\widehat{x}\|_{L^{p'}(\mathbb{R}^{d})}\cdot \|\widehat{y}\|_{L^{p}(\mathbb{R}^{d})}
 \right\}.
\end{eqnarray*} 
Now applying the Hausdorff-Young inequality \eqref{direct-H-Y_ineq} with respect to $y,$ we get
\begin{eqnarray*}
\|x\|_{L^p(\mathbb{R}^d_\theta)}&\leq&\sup_{y\in L^{p'}(\mathbb{R}^{d}_{\theta})\atop
  \|y\|_{L^{p'}(\mathbb{R}^{d}_{\theta})}=1}\left\{ \|\widehat{x}\|_{L^{p'}(\mathbb{R}^{d})}\cdot \|\widehat{y}\|_{L^{p}(\mathbb{R}^{d})} 
 \right\}\\
 &\overset{\eqref{direct-H-Y_ineq}}{\leq}& \sup_{y\in L^{p'}(\mathbb{R}^{d}_{\theta})\atop
  \|y\|_{L^{p'}(\mathbb{R}^{d}_{\theta})}=1}\left\{ \|\widehat{x}\|_{L^{p'}(\mathbb{R}^{d})}\cdot \|y\|_{L^{p'}(\mathbb{R}^{d}_{\theta})} 
 \right\}= \|\widehat{x}\|_{L^{p'}(\mathbb{R}^{d})},
\end{eqnarray*} 
thereby completing the proof.
\end{proof}

In order to prove the Hausdorff–Young–Paley inequality, we first need to prove the following Paley-type inequality.
\begin{thm}\label{Direct_P_ineq} (Paley-type inequality). Let $1 < p \leq 2$ and let $h:\mathbb{R}^{d}\to \mathbb{R}_{+}$ be a strictly positive function such that   
 \begin{eqnarray}\label{Cond_D_P_ineq}
 M_h:=\sup\limits_{t>0}t\int\limits_{h(s)\geq t}ds<\infty.
 \end{eqnarray}
Then for any $x\in L^p(\mathbb{R}_{\theta}^d)$ we have 
\begin{eqnarray}\label{D_P_ineq}
\left(\int\limits_{\mathbb{R}^d} |\widehat{x}(s)|^p h^{2-p}(s)ds \right)^\frac{1}{p}\leq c_p M^\frac{2-p}{p}_h\|x\|_{L^p(\mathbb{R}_{\theta}^d)},
\end{eqnarray}  
where $c_p>0$ is a constant independent of $x.$ 
  \end{thm}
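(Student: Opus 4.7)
The plan is to reduce \eqref{D_P_ineq} to an operator boundedness statement and then apply Marcinkiewicz interpolation between a Plancherel endpoint at $p=2$ and a weak-type $(1,1)$ endpoint, mirroring the classical proof of Paley's inequality. Introduce the (commutative) weighted measure $d\mu(s) = h(s)^2\,ds$ on $\mathbb{R}^d$ and the linear operator
$$
A: \mathcal{S}(\mathbb{R}^d_\theta)\to L^0(\mathbb{R}^d,\mu),\qquad Ax(s) := \frac{\widehat{x}(s)}{h(s)}.
$$
A direct change of variables shows
$$
\int_{\mathbb{R}^d}|Ax(s)|^p\,d\mu(s) \;=\; \int_{\mathbb{R}^d}|\widehat{x}(s)|^p h(s)^{2-p}\,ds,
$$
so \eqref{D_P_ineq} is equivalent to the bound $\|A\|_{L^p(\mathbb{R}^d_\theta)\to L^p(\mathbb{R}^d,\mu)}\lesssim_p M_h^{(2-p)/p}$ for $1<p\leq 2$.

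The $L^2$ endpoint is immediate: by Plancherel \eqref{Parseval} (a special case of Lemma \ref{P-relation}),
$$
\int_{\mathbb{R}^d}|Ax(s)|^2\,d\mu(s) = \int_{\mathbb{R}^d}|\widehat{x}(s)|^2\,ds = \|x\|_{L^2(\mathbb{R}^d_\theta)}^2,
$$
so $A:L^2(\mathbb{R}^d_\theta)\to L^2(\mathbb{R}^d,\mu)$ has norm $1$. For the weak $(1,1)$ endpoint, the $p=1$ case of \eqref{direct-H-Y_ineq} gives $\|\widehat{x}\|_{L^\infty(\mathbb{R}^d)}\leq\|x\|_{L^1(\mathbb{R}^d_\theta)}$, so $|Ax(s)|>\lambda$ forces $h(s)<\|x\|_{L^1(\mathbb{R}^d_\theta)}/\lambda =: u$. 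A layer-cake computation then yields
$$
\mu\{|Ax|>\lambda\}\leq \int_{\{h<u\}}h^2\,ds = \int_0^u 2y\,|\{y<h<u\}|\,dy \leq \int_0^u 2y\cdot\frac{M_h}{y}\,dy = 2M_h u,
$$
where the hypothesis \eqref{Cond_D_P_ineq} is used to bound $|\{h>y\}|$ by $M_h/y$. This is the weak-type bound $\|A\|_{L^1(\mathbb{R}^d_\theta)\to L^{1,\infty}(\mathbb{R}^d,\mu)}\leq 2M_h$.

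Finally, I would apply real (Marcinkiewicz) interpolation to these two endpoints. With $1/p = (1-\eta)+\eta/2$, i.e.\ $1-\eta = (2-p)/p$, this produces
$$
\|A\|_{L^p(\mathbb{R}^d_\theta)\to L^p(\mathbb{R}^d,\mu)} \leq c_p\,(2M_h)^{(2-p)/p}\cdot 1^{\eta} = c_p\,M_h^{(2-p)/p},
$$
which is precisely \eqref{D_P_ineq}. The main point requiring care is that the source spaces are noncommutative, so one must invoke the fact that $\bigl(L^1(\mathbb{R}^d_\theta),L^2(\mathbb{R}^d_\theta)\bigr)$ forms a real interpolation couple whose intermediate spaces are, up to equivalent norms, the noncommutative $L^p(\mathbb{R}^d_\theta)$; this is a standard feature of $L^p$-spaces attached to a semifinite trace, and is the only place where the argument departs from its classical counterpart. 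Everything else is the classical Paley proof transplanted verbatim, with the noncommutative Plancherel and Hausdorff–Young identities \eqref{Parseval} and \eqref{direct-H-Y_ineq} replacing their classical versions.
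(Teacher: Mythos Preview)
Your proposal is correct and follows essentially the same route as the paper: define $A(x)=\widehat{x}/h$ into $L^p(\mathbb{R}^d,h^2\,ds)$, use Plancherel for the strong $(2,2)$ bound, use the pointwise estimate $|\widehat{x}(s)|\le\|x\|_{L^1(\mathbb{R}^d_\theta)}$ together with the layer-cake computation $\int_{\{h<u\}}h^2\,ds\le 2M_h u$ for the weak $(1,1)$ bound, and conclude by Marcinkiewicz interpolation. The only cosmetic difference is that the paper obtains the $L^1\to L^\infty$ bound on $\widehat{x}$ via the noncommutative H\"older inequality applied to $\tau_\theta(xU_\theta(s)^*)$ rather than citing \eqref{direct-H-Y_ineq} at $p=1$, and it invokes Marcinkiewicz interpolation directly from \cite{BL1976} without singling out the noncommutative source couple as a separate issue.
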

\begin{proof}Let $\nu$ be a measure on $\mathbb{R}^{d}$ such that 
$\nu(s):= h^2(s)ds>0.$ Define the space $L^p(\mathbb{R}^{d}, \nu)$ as the space of all complex valued measurable functions $f$ on $\mathbb{R}^{d}$ such that
\begin{eqnarray*} 
\|f\|_{L^p(\mathbb{R}^{d}, \nu)}:=\left(\int\limits_{\mathbb{R}^{d}}|f(s)|^p h^2(s)ds\right)^\frac{1}{p}<\infty.
\end{eqnarray*}
It is not difficult to see that this is a Banach space with the above norm.
Now, define an operator $A:L^p(\mathbb{R}_{\theta}^{d})\to L^p(\mathbb{R}^{d}, \nu)$ by formula 
\begin{eqnarray*} 
A(x)=\frac{\widehat{x}}{h}.
\end{eqnarray*}
Since 
$$\widehat{x+y}(s)=\tau_{\theta}((x+y)U_{\theta}(s)^*)=\tau_{\theta}(xU_{\theta}(s)^*)+\tau_{\theta}(yU_{\theta}(s)^*)=\widehat{x}(s)+\widehat{y}(s), \quad x,y\in \mathcal{S}(\mathbb{R}_{\theta}^d),$$ it follows that $A$ is a linear operator. Next, we will show that $A$
is well-defined and bounded from $L^p(\mathbb{R}_{\theta}^d)$ to  $L^p(\mathbb{R}^d, \nu)$ for $1 \leq p \leq 2$. In other words, we claim that we have the estimate \eqref{D_P_ineq}
 with the condition \eqref{Cond_D_P_ineq}. First, we will show that $A$ is of a weak type $(2,2)$ and of a weak-type $(1,1)$. Recall that the distribution function $d_{A(f)}(t),$ $t>0,$ with respect
to the weight $h^2(s)>0$ is defined by
 \begin{eqnarray*} 
d_{A(x)}(t):=\nu\{s>0: |A(x)|>t\}=\int\limits_{|A(x)|>t}h^2(s)ds. 
\end{eqnarray*} 
Now, we show that 
\begin{eqnarray}\label{D-additive4.3}
d_{A(x)}(t)\leq\left(\frac{c_2\|x\|_{L^2(\mathbb{R}^d_\theta)}}{t}\right)^2  \;\;\;\text{with}\;\;\;c_2=1,
\end{eqnarray}
 and 
\begin{eqnarray}\label{D-additive4.4}
d_{A(x)}(t)\leq \frac{c_1\|x\|_{L^1(\mathbb{R}^d_\theta)}}{t}   \;\;\;\text{with}\;\;\;c_1=2M_h. 
\end{eqnarray}

Indeed,  we first prove  the inequality  (\ref{D-additive4.3}). It follows from the Chebyshev's inequality and Plancherel's identity (\ref{Parseval}) that
 \begin{eqnarray*} 
t^2d_{A(x)}(t)\leq \|A(x)\|^2_{L^2(\mathbb{R}^d, \nu)}
= \int\limits_{\mathbb{R}^d} |\widehat{x}(s)|^2 ds=\|\widehat{x}\|^2_{L^{2}(\mathbb{R}^d)}
\overset{ \text{(\ref{Parseval})}}{=}\|x\|^2_{L^2(\mathbb{R}^d_{\theta})}.
\end{eqnarray*} 
Thus, $A$ is of weak type $(2,2)$ with operator norm at most $c_2=1$.  
Now, let us show \eqref{D-additive4.4}. Applying the noncommutative H\"{o}lder inequality with respect to $p=1$ and $p'=\infty,$ we have
\begin{eqnarray*}
\frac{|\widehat{x}(s)|}{h(s)} \leq \frac{|\tau_{\theta}(xU_{\theta}(s)^*)|}{h(s)} \leq\frac{\|U_{\theta}(s)\|_{L^\infty(\mathbb{R}^d_\theta)}\|x\|_{L^1(\mathbb{R}_{\theta}^d)}}{h(s)}\leq\frac{\|x\|_{L^1(\mathbb{R}^d_{\theta})}}{h(s)}, \quad s\in\mathbb{R}^d.
\end{eqnarray*} 
Therefore, we have
\begin{eqnarray*}
 \{s\in\mathbb{R}^d: \frac{|\widehat{x}(s)|}{h(s)}>t\} \subset
 \{s\in\mathbb{R}^d: \frac{\|x\|_{L^1(\mathbb{R}_{\theta}^d)} }{h(s)}>t\}
\end{eqnarray*} for any $t>0.$
Consequently,
  \begin{eqnarray*}
 \nu \{s\in\mathbb{R}^d: \frac{|\widehat{x}(s)|}{h(s)}>t\} \leq
 \nu\{s\in\mathbb{R}^d: \frac{\|x\|_{L^1(\mathbb{R}_{\theta}^d)} }{h(s)}>t\}
\end{eqnarray*}
for any $t>0.$
Setting  $v:=\frac{\|x\|_{L^1(\mathbb{R}_{\theta}^d)}}{t}$, we obtain 
\begin{equation}\label{D-additive4.5}
  \nu \{s\in\mathbb{R}^d: \frac{|\widehat{x}(s)|}{h(s)}>t\} \leq
 \nu\{s\in\mathbb{R}^d: \frac{\|x\|_{L^1(\mathbb{R}_{\theta}^d)} }{h(s)}>t\}=\int\limits_{h(s)\leq v}h^2(s)ds.
\end{equation}
Let us estimate the right hand side. We now claim that
\begin{eqnarray}\label{D-additive4.6}
\int\limits_{h(s)\leq v}h^2(s)ds\leq 2v\cdot M_h .
\end{eqnarray}
Indeed, first we have
$$\int\limits_{h(s)\leq v}h^2(s)ds=\int\limits_{  h(s)\leq v}\int\limits_0^{h^2(s)}d\xi.$$
By interchanging the order of integration we obtain
$$\int\limits_{h(s)\leq v}ds\int\limits_0^{h^2(s)}d\xi=\int\limits_0^{v^2}d\xi\int\limits_{\xi^\frac{1}{2}\leq h(s)\leq v}ds.$$
Further, by a substitution $\xi = t^2$ we have
$$\int\limits_0^{v^2}d\xi\int\limits_{\xi^\frac{1}{2}\leq h(s)\leq v}ds=2\int\limits_0^{v}tdt\int\limits_{t\leq h(s)\leq v}ds\leq 2\int\limits_0^{v}t dt\int\limits_{t\leq h(s)}ds.$$
Since 
$$t \int\limits_{t\leq h(s)}ds\leq \sup_{t>0}t\int\limits_{t \leq h(s)}ds= M_{h}$$
and $M_{h}<\infty$ by assumption, it follows that
$$2\int\limits_0^{v}t dt\int\limits_{t\leq h(s)}ds\leq 2 v\cdot M_{h}.$$
This proves claim \eqref{D-additive4.6}.
Combining \eqref{D-additive4.5} and \eqref{D-additive4.6} we obtain (\ref{D-additive4.4}) which shows that $A$ is indeed of weak type $(1,1)$ with operator norm at most $c_1 = 2M_h$. Then, by using Marcinkiewicz interpolation theorem (see, \cite[Theorem 1.3.1]{BL1976}) with $p_1 = 1,$ $p_2 = 2$
and $\frac{1}{p}=\frac{1-\eta}{1}+\frac{\eta}{2},$ we now obtain the inequality (\ref{D_P_ineq}).  This completes the proof.  
\end{proof}

\begin{rem}Let $\theta=0$ and $d=1.$ If $h(s)=\frac{1}{1+|s|}, s\in\mathbb{R},$ then $M_h<\infty$ in Theorem \ref{Direct_P_ineq}. Consequently, we obtain the classical Hardy-Littlewood inequality \cite[Theorem 2.1]{ANR}), \cite{HL} (see also \cite{Y} for compact quantum groups of Kac type).
    
\end{rem}
As a consequence of the Paley-type inequality in Theorem \ref{Direct_P_ineq}, we obtain the following Hardy-Littlewood inequality on the quantum Euclidean space. 

\begin{thm}\label{Direct-H-L_ineq} (Hardy-Littlewood type inequality). Let $1 < p \leq 2$ and let $\varphi:\mathbb{R}^d\to \mathbb{R}_{+}$ be a strictly positive
function such that 
$$\int\limits_{\mathbb{R}^d}\frac{1}{\varphi^{\beta}(s)}ds<\infty \quad \text{for}\quad \text{some} \quad \beta>0.
$$
 Then for any $x\in L^p(\mathbb{R}^d_{\theta})$ we have 
\begin{eqnarray}\label{D-H_L_ineq}
\left(\int\limits_{\mathbb{R}^{d}} |\widehat{x}(s)|^p \varphi^{\beta(p-2)}(s)ds \right)^\frac{1}{p}\leq c_p \|x\|_{L^p(\mathbb{R}^d_{\theta})},
 \end{eqnarray}   
 where $c_p >0$ is a constant independent of $x.$
  \end{thm}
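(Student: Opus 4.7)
The proof plan is to reduce the Hardy--Littlewood inequality to the Paley-type inequality of Theorem \ref{Direct_P_ineq} via a clever choice of weight. Setting $h(s) := \varphi^{-\beta}(s)$, one sees that the weight $h^{2-p}(s)$ appearing in Paley's inequality equals precisely $\varphi^{-\beta(2-p)}(s) = \varphi^{\beta(p-2)}(s)$, which is the weight in the desired inequality. Thus the only task is to verify that the hypothesis $M_h < \infty$ of Theorem \ref{Direct_P_ineq} holds for this particular $h$, and then to invoke Paley.

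For the verification, I would use a trivial layer-cake/Chebyshev estimate: for every $t>0$,
\begin{equation*}
t \int_{h(s)\geq t} ds \;\leq\; \int_{h(s)\geq t} h(s)\, ds \;\leq\; \int_{\mathbb{R}^d} h(s)\, ds \;=\; \int_{\mathbb{R}^d} \varphi^{-\beta}(s)\, ds,
\end{equation*}
and this last quantity is finite by the hypothesis on $\varphi$. Taking the supremum over $t>0$ yields
\begin{equation*}
M_h \;=\; \sup_{t>0} t \int_{h(s)\geq t} ds \;\leq\; \|\varphi^{-\beta}\|_{L^1(\mathbb{R}^d)} \;<\; \infty.
\end{equation*}

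With this in hand, Theorem \ref{Direct_P_ineq} applies directly to the function $h$: for every $x\in L^p(\mathbb{R}^d_\theta)$ with $1<p\leq 2$,
\begin{equation*}
\left(\int_{\mathbb{R}^d} |\widehat{x}(s)|^p \varphi^{\beta(p-2)}(s)\, ds\right)^{1/p}
\;=\; \left(\int_{\mathbb{R}^d} |\widehat{x}(s)|^p h^{2-p}(s)\, ds\right)^{1/p}
\;\leq\; c_p\, M_h^{(2-p)/p}\, \|x\|_{L^p(\mathbb{R}^d_\theta)},
\end{equation*}
which is exactly \eqref{D-H_L_ineq} with constant $c_p \|\varphi^{-\beta}\|_{L^1}^{(2-p)/p}$ absorbed into the $c_p$ of the statement.

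I do not expect a genuine obstacle here: the argument is entirely structural once one notices the right change of variables $h = \varphi^{-\beta}$, since the integrability assumption on $\varphi^{-\beta}$ is the natural (and essentially optimal) way to ensure $M_h<\infty$. The only small point to be careful about is that the constant in the final inequality depends on $\|\varphi^{-\beta}\|_{L^1}$ and on $p$, but not on $x$, which is consistent with the statement.
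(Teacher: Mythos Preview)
Your proof is correct and follows essentially the same approach as the paper: both set $h=\varphi^{-\beta}$, verify $M_h\le \|\varphi^{-\beta}\|_{L^1(\mathbb{R}^d)}<\infty$ via the Chebyshev-type bound $t\,|\{h\ge t\}|\le \int_{\{h\ge t\}} h\,ds\le \int_{\mathbb{R}^d} h\,ds$, and then invoke Theorem~\ref{Direct_P_ineq}. There is no meaningful difference in strategy or detail.
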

  \begin{proof}By the assumption, we have
  $$C:=\int\limits_{\mathbb{R}^d}\frac{1}{\varphi^{\beta}(s)}ds<\infty\quad \text{for}\quad \text{some} \quad \beta> 0.$$
Thus,
$$C\geq \int\limits_{\varphi^{\beta}(s)\leq \frac{1}{t}}\frac{1}{\varphi^{\beta}(s)}ds\geq t\int\limits_{\varphi^{\beta}(s)\leq \frac{1}{t}}ds=t\int\limits_{t\leq \frac{1}{\varphi^{\beta}(s)}}ds,$$
where $t>0.$
Hence, taking supremum with respect to $t>0,$ we obtain
$$\sup_{t>0}t\int\limits_{t\leq \frac{1}{\varphi^{\beta}(s)}}ds\leq C<\infty.$$
Then, applying Theorem \ref{Direct_P_ineq} for the function $h(s)=\frac{1}{\varphi^{\beta}(s)},$ $s\in\mathbb{R}^d,$
we obtain the desired result.
  \end{proof}

\begin{thm}\label{R-H-L_ineq} (Inverse Hardy-Littlewood type inequality). Let $2 \leq p <\infty$  with $\frac{1}{p}+\frac{1}{p'}=1$ and let $\varphi:\mathbb{R}^d\to \mathbb{R}_{+}$ be a strictly positive
function such that 
$$\int\limits_{\mathbb{R}^d}\frac{1}{\varphi^{\beta}(s)}ds<\infty \quad \text{for}\quad \text{some} \quad \beta> 0.
$$ 
If $$\int\limits_{\mathbb{R}^d}|\widehat{x}(s)|^p\varphi^{\frac{\beta p(2-p')}{p'}}(s)ds<\infty,
$$ then $x\in L^p(\mathbb{R}^{d}_{\theta})$ and we have
\begin{eqnarray}\label{R_H_L_ineq}
\|x\|^p_{L^p(\mathbb{R}^d_\theta)}\leq C_p\int\limits_{\mathbb{R}^d}|\widehat{x}(s)|^p\varphi^{\frac{\beta p(2-p')}{p'}}(s)ds,
 \end{eqnarray}   
 where $C_p>0$ is a constant independent of $x.$
  \end{thm}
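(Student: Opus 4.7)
\medskip

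\noindent\textbf{Proof plan.} The strategy is to dualize the statement and reduce it to the direct Hardy-Littlewood inequality (Theorem \ref{Direct-H-L_ineq}) applied at the conjugate exponent $p'$, which lies in the range $(1,2]$ precisely because $p\in[2,\infty)$. This is parallel to how the inverse Hausdorff-Young inequality (Proposition \ref{R-H-Y_ineq}) was obtained from the direct one, and the Plancherel-type identity of Lemma \ref{P-relation} will provide the required bridge between the operator side and the Fourier side.

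First, I would use the duality $L^p(\mathbb{R}^d_\theta)=(L^{p'}(\mathbb{R}^d_\theta))^*$ (valid because $1<p'<\infty$) to write
$$
\|x\|_{L^p(\mathbb{R}^d_\theta)} = \sup\bigl\{|\tau_\theta(xy^*)| : y\in L^{p'}(\mathbb{R}^d_\theta),\ \|y\|_{L^{p'}(\mathbb{R}^d_\theta)}\leq 1\bigr\}.
$$
Restricting the supremum to the dense subspace $\mathcal{S}(\mathbb{R}^d_\theta)$ and invoking Lemma \ref{P-relation}, the pairing becomes
$$
\tau_\theta(xy^*)=\int_{\mathbb{R}^d}\widehat{x}(s)\overline{\widehat{y}(s)}\,ds.
$$

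Next, I would split the integrand symmetrically via the weight $w(s):=\varphi(s)^{\beta(2-p')/p'}$, writing $\widehat{x}\,\overline{\widehat{y}}=[\widehat{x}\,w]\cdot[\overline{\widehat{y}}\,w^{-1}]$, and apply the classical Hölder inequality with exponents $p$ and $p'$. A short exponent computation gives $-p'\cdot\beta(2-p')/p'=\beta(p'-2)$, so
$$
\Bigl|\int_{\mathbb{R}^d}\widehat{x}(s)\overline{\widehat{y}(s)}\,ds\Bigr|
\leq
\Bigl(\int_{\mathbb{R}^d}|\widehat{x}(s)|^p \varphi^{\beta p(2-p')/p'}(s)\,ds\Bigr)^{1/p}
\Bigl(\int_{\mathbb{R}^d}|\widehat{y}(s)|^{p'}\varphi^{\beta(p'-2)}(s)\,ds\Bigr)^{1/p'}.
$$
The second factor is now exactly in the form of Theorem \ref{Direct-H-L_ineq} applied with the exponent $p'\in(1,2]$ and the same parameter $\beta$; the standing assumption $\int_{\mathbb{R}^d}\varphi^{-\beta}ds<\infty$ guarantees its hypothesis. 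It is therefore bounded by $c_{p'}\|y\|_{L^{p'}(\mathbb{R}^d_\theta)}\leq c_{p'}$. Taking the supremum over admissible $y$ and raising to the $p$-th power yields \eqref{R_H_L_ineq} with $C_p=c_{p'}^p$.

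The genuinely nontrivial issue is the membership statement \emph{$x\in L^p(\mathbb{R}^d_\theta)$}, since the duality argument a priori only produces a bound under the assumption that $x$ already lies in $L^p(\mathbb{R}^d_\theta)$. To handle this properly, I would view $y\mapsto\int\widehat{x}\,\overline{\widehat{y}}\,ds$, initially defined for $y\in\mathcal{S}(\mathbb{R}^d_\theta)$, as a continuous linear functional on $L^{p'}(\mathbb{R}^d_\theta)$; the Hölder/direct-Hardy-Littlewood estimate above supplies precisely the continuity bound, and the functional then extends by density. Applying the duality $L^p(\mathbb{R}^d_\theta)=(L^{p'}(\mathbb{R}^d_\theta))^*$ in the reverse direction realises the extended functional as pairing against a unique element of $L^p(\mathbb{R}^d_\theta)$, which must coincide with $x$, and whose norm is majorised by the right-hand side of \eqref{R_H_L_ineq}. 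Apart from this density/representation step, the proof is essentially exponent bookkeeping, so I do not anticipate a serious obstacle.
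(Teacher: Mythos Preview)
Your proposal is correct and follows essentially the same route as the paper: duality for $L^p(\mathbb{R}^d_\theta)$, the Plancherel relation of Lemma~\ref{P-relation}, the weight splitting $\varphi^{\beta(2-p')/p'}$ together with the classical H\"older inequality, and then Theorem~\ref{Direct-H-L_ineq} applied at the conjugate exponent $p'\in(1,2]$. Your additional remarks on the membership claim $x\in L^p(\mathbb{R}^d_\theta)$ via a density/representation argument are in fact more careful than the paper, which simply runs the duality computation without isolating this step.
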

  \begin{proof}By duality of $L_p(\mathbb{R}^{d}_{\theta})$ we have
  $$\|x\|_{L^p(\mathbb{R}^{d}_{\theta})}=\sup\left\{\left|\tau_{\theta}(xy^*)\right|:y\in L^{p'}(\mathbb{R}^{d}_{\theta}),\quad
  \|y\|_{L^{p'}(\mathbb{R}^{d}_{\theta})}=1\right\}.$$
  By Lemma \ref{P-relation} we have 
 \begin{eqnarray*} 
  \tau_{\theta}(xy^*)=\int_{\mathbb{R}^d}\widehat{x}(s)\overline{\widehat{y}(s)}ds, \quad x,y\in \mathcal{S}(\mathbb{R}^d_{\theta}).
 \end{eqnarray*}
  Since $|\widehat{x}(s)\overline{\widehat{y}(s)}|\leq |\widehat{x}(s)||\overline{\widehat{y}(s)}|$ for any $s\in\mathbb{R}^d,$ applying the classical H\"{o}lder inequality for any $y\in L^{p'}(\mathbb{R}^{d}_{\theta})$ with $\|y\|_{L^{p'}(\mathbb{R}^{d}_{\theta})}=1,$ we obtain
  \begin{eqnarray*}
\|x\|_{L^p(\mathbb{T}^d_\theta)}&=&\sup\{|\tau_{\theta}(xy^*)|:y\in L^{p'}(\mathbb{R}^{d}_{\theta}),\quad
  \|y\|_{L^{p'}(\mathbb{R}^{d}_{\theta})}=1\}\\
&=&\sup\left\{\left|\int_{\mathbb{R}^d}\widehat{x}(s)\overline{\widehat{y}(s)}ds\right|:y\in L^{p'}(\mathbb{R}^{d}_{\theta}),\quad
  \|y\|_{L^{p'}(\mathbb{R}^{d}_{\theta})}=1\right\}\\
&\leq &\sup\left\{\int_{\mathbb{R}^d}|\widehat{x}(s)\overline{\widehat{y}(s)}|ds:y\in L^{p'}(\mathbb{R}^{d}_{\theta}),\quad
  \|y\|_{L^{p'}(\mathbb{R}^{d}_{\theta})}=1\right\}\\
&\leq &\sup\left\{\int_{\mathbb{R}^d}|\widehat{x}(s)||\widehat{y}(s)|ds:y\in L^{p'}(\mathbb{R}^{d}_{\theta}),\quad
  \|y\|_{L^{p'}(\mathbb{R}^{d}_{\theta})}=1\right\} \\
  &\leq&\sup\left\{\int_{\mathbb{R}^d}\varphi^{\frac{\beta (2-p')}{p'}}(s)|\widehat{x}(s)|\cdot\varphi^{\frac{\beta (p'-2)}{p'}}(s)|\widehat{y}(s)|ds:y\in L^{p'}(\mathbb{R}^{d}_{\theta}),\quad
  \|y\|_{L^{p'}(\mathbb{R}^{d}_{\theta})}=1\right\} \\
  &\leq&\sup_{y\in L^{p'}(\mathbb{R}^{d}_{\theta})\atop
  \|y\|_{L^{p'}(\mathbb{R}^{d}_{\theta})}=1}\left\{\left(\int_{\mathbb{R}^d}\varphi^{\frac{\beta p(2-p')}{p'}}(s)|\widehat{x}(s)|^pds\right)^{1/p}\cdot\left(\int_{\mathbb{R}^d}\varphi^{\beta (p'-2)}(s)|\widehat{y}(s)|^{p'}ds\right)^{1/p'}
 \right\}.
\end{eqnarray*} 
Now applying Theorem \ref{Direct-H-L_ineq} with respect to $p',$ we get 
 \begin{eqnarray*}
\|x\|_{L^p(\mathbb{R}^d_\theta)}
  &\leq&\sup_{y\in L^{p'}(\mathbb{R}^{d}_{\theta})\atop
  \|y\|_{L^{p'}(\mathbb{R}^{d}_{\theta})}=1}\left\{\left(\int_{\mathbb{R}^d}\varphi^{\frac{\beta p(2-p')}{p'}}(s)|\widehat{x}(s)|^p\right)^{1/p}\cdot\left(\int_{\mathbb{R}^d}\varphi^{\beta (p'-2)}(s)|\widehat{y}(s)|^{p'}\right)^{1/p'}
 \right\}\\
 &\lesssim&\left(\int_{\mathbb{R}^d}\varphi^{\frac{\beta p(2-p')}{p'}}(s)|\widehat{x}(s)|^p\right)^{1/p}\cdot\sup_{y\in L^{p'}(\mathbb{R}^{d}_{\theta})\atop
  \|y\|_{L^{p'}(\mathbb{R}^{d}_{\theta})}=1}\|y\|_{L^{p'}(\mathbb{R}^{d}_{\theta})}.
 \end{eqnarray*} 
Since $\|y\|_{L^{p'}(\mathbb{R}^{d}_{\theta})}=1,$ taking $C_p=c_{p'},$ we complete the proof.
  \end{proof}
\begin{rem} If $p=2,$ then the statements in Theorems \ref{Direct_P_ineq}, \ref{Direct-H-L_ineq}, and \ref{R-H-L_ineq} reduce to the Plancherel identity \eqref{Parseval}.    
\end{rem}
The following result can be inferred from \cite[Corollary 5.5.2, p. 120]{BL1976}.
\begin{prop}\label{weithg_inter}
  Let $ d\nu_1(\xi) = \omega_1(\xi)d\xi$, $d\nu_2(\xi)=\omega_2(\xi)d\xi, \,\ \xi\in\mathbb{R}^{d}.$ Suppose that $1\leq p,q_0, q_1 < \infty.$ If a continuous linear operator $A$ admits bounded extensions $A:L^p(\mathbb{R}^{d}_{\theta}) \rightarrow
L^{q_0} (\mathbb{R}^d,\nu_1)$  and $A:L^p(\mathbb{R}^{d}_{\theta}) \rightarrow L^{q_1}(\mathbb{R}^d,\nu_2),$ then there exists a bounded extension $A:L^p(\mathbb{R}^{d}_{\theta})\rightarrow  L^q(\mathbb{R}^d,\nu)$ where $0 < \eta < 1,$ $\frac{1}{q}=\frac{1-\eta }{q_0}+\frac{\eta}{q_1}$ and $d\nu=\omega(\xi)d\xi, \,\ \omega=\omega_1^\frac{q(1-\eta)}{q_0}\cdot\omega_2^\frac{q\eta }{q_1}$.
\end{prop}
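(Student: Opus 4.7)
The plan is to reduce the statement to the Stein--Weiss interpolation theorem with change of measure, which is exactly the cited Bergh--L\"ofstr\"om Corollary 5.5.2. A key simplification is that the source space $L^p(\mathbb{R}^{d}_{\theta})$ is held fixed and only the targets vary, so no interpolation of noncommutative $L^p$-spaces is required; the whole analytic argument lives on the target side, which consists of classical weighted Lebesgue spaces on $\mathbb{R}^d$.

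First I would invoke the Stein--Weiss identification of the complex interpolation space,
$$
[L^{q_0}(\mathbb{R}^d,\nu_1),\, L^{q_1}(\mathbb{R}^d,\nu_2)]_\eta \;=\; L^q(\mathbb{R}^d,\nu),
$$
with $\frac{1}{q}=\frac{1-\eta}{q_0}+\frac{\eta}{q_1}$ and $d\nu=\omega\,d\xi$, $\omega = \omega_1^{q(1-\eta)/q_0}\,\omega_2^{q\eta/q_1}$, which is the content of the quoted corollary. The pair $(L^{q_0}(\nu_1), L^{q_1}(\nu_2))$ is automatically a compatible Banach couple because the weights $\omega_1,\omega_2$ are strictly positive, so both spaces embed in the common ambient space of measurable functions on $\mathbb{R}^d$ modulo a.e.\ equality.

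Next I would apply the abstract complex-interpolation principle for linear operators: whenever $(Y_0,Y_1)$ is a compatible Banach couple and $X$ is any fixed Banach space, a linear operator $A$ with bounded extensions $A:X\to Y_0$ and $A:X\to Y_1$ of norms $M_0,M_1$ admits a bounded extension $A:X\to [Y_0,Y_1]_\eta$ with norm at most $M_0^{1-\eta}M_1^{\eta}$. Setting $X=L^p(\mathbb{R}^{d}_{\theta})$, $Y_0=L^{q_0}(\mathbb{R}^d,\nu_1)$ and $Y_1=L^{q_1}(\mathbb{R}^d,\nu_2)$, and combining with the identification of the previous paragraph, yields the claimed bounded extension $A:L^p(\mathbb{R}^{d}_{\theta})\to L^q(\mathbb{R}^d,\nu)$.

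The only point that might at first glance seem problematic, and which I regard as the ``main obstacle'' even though it dissolves on inspection, is the fact that the source is a noncommutative $L^p$-space. This creates no difficulty: the abstract interpolation principle needs only that $X$ be a single fixed Banach space, which $L^p(\mathbb{R}^{d}_{\theta})$ certainly is, and all the delicate weighted interpolation is performed on the commutative target side via Stein--Weiss. Consequently the argument is a direct citation of \cite[Corollary 5.5.2, p.~120]{BL1976}, and no genuinely new technical ingredient is introduced here.
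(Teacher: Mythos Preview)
Your proposal is correct and matches the paper's own treatment: the paper does not give a proof but simply states that the result ``can be inferred from \cite[Corollary 5.5.2, p.~120]{BL1976},'' and your argument is precisely the natural unpacking of that citation---Stein--Weiss identification of $[L^{q_0}(\nu_1),L^{q_1}(\nu_2)]_\eta$ combined with the trivial observation that the fixed source $L^p(\mathbb{R}^d_\theta)$ interpolates to itself. There is nothing to add.
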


\begin{thm}\label{D_H-Y-P_ineq}(Hausdorff–Young–Paley inequality). Let $1 < p \leq r \leq p' < \infty$ with $\frac{1}{p}+\frac{1}{p'}=1.$ Let $h$ is given as in Theorem \ref{Direct_P_ineq}. Then we have
\begin{eqnarray}\label{D-additive4.7}
\left(\int\limits_{\mathbb{R}^d} |\widehat{x}(\xi)|^r h(\xi)^{r\Big(\frac{1}{r}-\frac{1}{p'}\Big)} d\xi\right)^\frac{1}{r}\leq c_{p,r,p'} M^{\frac{1}{r}-\frac{1}{p'}}_h\|x\|_{L^p(\mathbb{R}^d_\theta)},
 \end{eqnarray}  
 where $c_{p,r,p'}>0$ is a constant independent of $x.$
\end{thm}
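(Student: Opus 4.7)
The plan is to obtain \eqref{D-additive4.7} by interpolating, via the weighted interpolation result of Proposition \ref{weithg_inter}, between the Paley-type estimate of Theorem \ref{Direct_P_ineq} (which is the ``endpoint'' $r=p$) and the Hausdorff--Young inequality \eqref{direct-H-Y_ineq} (which is the ``endpoint'' $r=p'$). Since we already know the Fourier transform $x\mapsto \widehat{x}$ on $\mathcal{S}(\mathbb{R}^d_\theta)$ is a linear operator, the interpolation framework applies directly.

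First, I would rewrite the two endpoint estimates in the language of Proposition \ref{weithg_inter}. Theorem \ref{Direct_P_ineq} says that $x\mapsto \widehat{x}$ is bounded
$$L^p(\mathbb{R}^d_\theta)\;\longrightarrow\;L^{q_0}(\mathbb{R}^d,\nu_1),\qquad q_0=p,\ d\nu_1(\xi)=\omega_1(\xi)\,d\xi,\ \omega_1=h^{2-p},$$
with norm bounded by $c_p M_h^{(2-p)/p}$; the Hausdorff--Young inequality \eqref{direct-H-Y_ineq} says it is bounded
$$L^p(\mathbb{R}^d_\theta)\;\longrightarrow\;L^{q_1}(\mathbb{R}^d,\nu_2),\qquad q_1=p',\ \omega_2\equiv 1,$$
with norm $\leq 1$.

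Second, I would invoke Proposition \ref{weithg_inter} with $0<\eta<1$ chosen so that
$$\frac{1}{r}=\frac{1-\eta}{p}+\frac{\eta}{p'},$$
which is solvable for $\eta\in(0,1)$ precisely under the hypothesis $p\leq r\leq p'$. The interpolated target space is $L^r(\mathbb{R}^d,\nu)$ with weight
$$\omega(\xi)=\omega_1(\xi)^{\,r(1-\eta)/p}\cdot\omega_2(\xi)^{\,r\eta/p'}=h(\xi)^{(2-p)\,r(1-\eta)/p},$$
and the operator norm is at most $\bigl(c_p M_h^{(2-p)/p}\bigr)^{1-\eta}\cdot 1^{\eta}$.

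The only real work left is the algebraic identification of the exponents. Using $1/p+1/p'=1$ one gets $(2-p)/p=1/p-1/p'$, and subtracting $1/p'$ from both sides of the definition of $\eta$ gives
$$\tfrac{1}{r}-\tfrac{1}{p'}=(1-\eta)\bigl(\tfrac{1}{p}-\tfrac{1}{p'}\bigr).$$
Multiplying the first identity by $r(1-\eta)$ and comparing yields
$$\frac{(2-p)\,r(1-\eta)}{p}=r\Bigl(\tfrac{1}{r}-\tfrac{1}{p'}\Bigr),\qquad \frac{(2-p)(1-\eta)}{p}=\tfrac{1}{r}-\tfrac{1}{p'},$$
so the weight becomes exactly $h^{r(1/r-1/p')}$ and the $M_h$ exponent becomes exactly $1/r-1/p'$, matching \eqref{D-additive4.7} with $c_{p,r,p'}:=c_p^{\,1-\eta}$. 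I expect this bookkeeping step, rather than any analytic difficulty, to be the main (and only) obstacle; once the exponents are matched, Proposition \ref{weithg_inter} delivers the estimate immediately on the dense subspace $\mathcal{S}(\mathbb{R}^d_\theta)$, and the general case follows by density.
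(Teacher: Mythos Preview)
Your proposal is correct and follows essentially the same route as the paper: both interpolate, via Proposition \ref{weithg_inter}, between the Paley-type estimate (Theorem \ref{Direct_P_ineq}) giving the $L^p(\mathbb{R}^d_\theta)\to L^p(\mathbb{R}^d,h^{2-p}d\xi)$ bound and the Hausdorff--Young inequality \eqref{direct-H-Y_ineq} giving the $L^p(\mathbb{R}^d_\theta)\to L^{p'}(\mathbb{R}^d,d\xi)$ bound, and then identify the interpolated weight and $M_h$-exponent exactly as you did. The only minor remark is that Proposition \ref{weithg_inter} is stated for $0<\eta<1$, so the borderline cases $r=p$ and $r=p'$ are not covered by the interpolation itself but follow directly from the two endpoint inequalities.
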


\begin{proof}First, let us consider a linear operator $ A(x):=\widehat{x}$ on $L^p(\mathbb{R}_{\theta}^d).$ Hence, applying the Paley-type inequality \eqref{D_P_ineq} with $1<p\leq 2,$ we obtain
\begin{eqnarray}\label{D-additive4.8}
\left(\int\limits_{\mathbb{R}^d} |\widehat{x}(\xi)| h^{2-p}(\xi)d\xi \right)^\frac{1}{p}\lesssim M^\frac{2-p}{p}_h\|x\|_{L^p(\mathbb{R}_{\theta}^d)}.
 \end{eqnarray} 
 In other words, $A$ is bounded from $L^p(\mathbb{R}^d_{\theta})$ to the weighted space $L^{p}(\mathbb{R}^d,\nu_1)$
 with the weight $\omega_1(\xi):= h^{2-p}(\xi)>0$ for $\xi\in\mathbb{R}^d.$
On the other hand, by the Hausdorff-Young inequality \eqref{direct-H-Y_ineq}, we have
\begin{eqnarray*}
\left(\int_{\mathbb{R}^d}|\widehat{x}(\xi)|^{p'}d\xi\right)^{1/p'}=\|\widehat{x}\|_{L^{p'}(\mathbb{R}^d)}\leq \|x\|_{L^p(\mathbb{R}_{\theta}^d)},
\end{eqnarray*} 
where $1\leq{p}\leq2$ with $\frac{1}{p}+\frac{1}{p'}=1$.
This shows that $A$ is bounded from $L^p(\mathbb{R}_{\theta}^d)$ to $L^{p'}(\mathbb{R}^d,\nu_2),$
where $\nu_2(\xi):=1d\xi$ for all $\xi\in \mathbb{R}^d.$ By Proposition \ref{weithg_inter} we infer that $A:L^p(\mathbb{R}^d_{\theta})\rightarrow L^r(\mathbb{R}^d, \nu)$ with $d\nu=\omega(\xi)d\xi,$ is bounded for any $r$ such that $p \leq r \leq p',$  where the space $L^r(\mathbb{R}^d, \nu)$ is defined as the space of all Lebesgue measurable functions $f$ on $\mathbb{R}^d$ with the norm 
$$\|f\|_{L^r(\mathbb{R}^d, \nu)}:=\left(\int\limits_{\mathbb{R}^d} |f(\xi)|^r \omega(\xi)d\xi \right)^\frac{1}{r}$$
and $\omega$ is a positive measurable function on $\mathbb{R}^d$ to be determined. Let us compute $\omega$ in our setting. Indeed, fix $\eta\in (0,1)$ such that $\frac{1}{r}=\frac{1-\eta}{p}+\frac{\eta}{p'}.$ Then we have that $\eta=\frac{p-r}{r(p-2)}$ and by Proposition \ref{weithg_inter} with respect to $q=r,$ $q_0=p,$ and $q_1=p',$ we obtain 
\begin{eqnarray*}\label{weight}
\omega(\xi)=(\omega_{1}(\xi))^{\frac{r(1-\eta)}{p}}\cdot(\omega_{2}(\xi))^{\frac{r\eta}{p'}}=(h^{2-p}(\xi))^{\frac{r(1-\eta)}{p}}\cdot1^{\frac{r\eta}{p'}}=h^{1-\frac{r}{p'}}(\xi)= h^{r\big(\frac{1}{r}-\frac{1}{p'}\big)}(\xi)
\end{eqnarray*}
for all $\xi\in\mathbb{R}^d$ and $\frac{2-p}{p}\cdot (1-\eta)=\frac{1}{r}-\frac{1}{p'}.$
Thus, 
$$\|A(x)\|_{L^{r}(\mathbb{R}^d, \nu)}\lesssim (M^\frac{2-p}{p}_h)^{1-\eta}\|x\|_{L^p(\mathbb{R}^d_{\theta})}=  M^{\frac{1}{r}-\frac{1}{p'}}_h  \|x\|_{L^p(\mathbb{R}^d_{\theta})}, \quad x\in L^p(\mathbb{R}^d_{\theta}),$$
where $d\nu=h^{r\big(\frac{1}{r}-\frac{1}{p'}\big)}(\xi)d\xi.$
This completes the proof.
\end{proof}

\section{A H\"ormander multiplier theorem on the quantum Euclidean space}\label{sc5}

In this section, we are concerned with the question of what assumptions on the symbol $g:\mathbb{R}^d\to \mathbb{C}$ guarantee that $g(D)$  is bounded from $L^p (\mathbb{R}^d_\theta )$  to $L^q (\mathbb{R}^d_\theta ).$ 

We first need to prove the following auxiliary results.
\begin{lem}\label{F_trans_Fourier_trans}We have

\begin{equation}\label{F_trans_Fourier_trans}
\widehat{g(D)(x)}=g\cdot\widehat{x}, \quad g\in \mathcal{S}(\mathbb{R}^d), \,\ x\in \mathcal{S}(\mathbb{R}^d_{\theta}).
\end{equation}
\end{lem}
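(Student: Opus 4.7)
The plan is to observe that the Fourier multiplier $g(D)$ is nothing but $\lambda_{\theta}$ applied to the product $g\cdot\widehat{x}$, and then to use the fact (implicit in the proof of Lemma~\ref{P-relation}) that the noncommutative Fourier transform $x\mapsto\widehat{x}$ is a one-sided inverse of the map $\lambda_{\theta}$.

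First, I would unwind the definition \eqref{Fourier-multiplier} of $g(D)$. Since $g:\mathbb{R}^d\to\mathbb{C}$ is bounded and $\widehat{x}\in\mathcal{S}(\mathbb{R}^d)$ (because $x\in\mathcal{S}(\mathbb{R}_\theta^d)$ and the Fourier transform is a topological isomorphism onto $\mathcal{S}(\mathbb{R}^d)$), the product $g\cdot\widehat{x}$ lies in $L^1(\mathbb{R}^d)$. Therefore the Bochner integral in \eqref{def-integration} applies, and we may write
$$
g(D)(x)=\int_{\mathbb{R}^d}g(s)\widehat{x}(s)U_\theta(s)\,ds=\lambda_\theta\bigl(g\cdot\widehat{x}\bigr),
$$
an element of $L^\infty(\mathbb{R}_\theta^d)$.

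Second, I would apply the identity $\widehat{\lambda_\theta(f)}=f$ to the function $f=g\cdot\widehat{x}$. This identity was established (as formula \eqref{**}) inside the proof of Lemma~\ref{P-relation}: the chain of equalities used only the Weyl relation \eqref{weyl-relation}, the antisymmetry of $\theta$ (which makes $(s,\theta s)=0$), a translation in the variable of integration, and the defining trace formula \eqref{trace-def}. Plugging in yields
$$
\widehat{g(D)(x)}(s)=\widehat{\lambda_\theta(g\widehat{x})}(s)=g(s)\widehat{x}(s),\qquad s\in\mathbb{R}^d,
$$
which is exactly \eqref{F_trans_Fourier_trans}.

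The only potential obstacle is that formula \eqref{**} in Lemma~\ref{P-relation} was derived for $f\in\mathcal{S}(\mathbb{R}^d)$, whereas here $g\widehat{x}$ belongs in general only to $L^1(\mathbb{R}^d)$. However, every step in the derivation of \eqref{**} carries over verbatim to $L^1$ input, because the Bochner integrability of $s\mapsto f(s)U_\theta(s)$ is guaranteed by $\|U_\theta(s)\|=1$ and $f\in L^1$, and the trace identity $\tau_\theta(\lambda_\theta(f))=f(0)$ extends continuously via the semifinite normal extension of $\tau_\theta$. Thus the identity $\widehat{\lambda_\theta(f)}=f$ is valid on $L^1(\mathbb{R}^d)$, and the argument closes.
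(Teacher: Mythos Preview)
Your proposal is correct and follows essentially the same route as the paper: both recognise that $g(D)(x)=\lambda_\theta(g\widehat{x})$ directly from \eqref{Fourier-multiplier} and \eqref{**}, and then invoke the fact that $\lambda_\theta$ and $x\mapsto\widehat{x}$ are mutual inverses. The only cosmetic difference is that the paper writes $g(D)(x)=\lambda_\theta(\widehat{g(D)(x)})$ and then appeals to the injectivity of $\lambda_\theta$, whereas you apply the identity $\widehat{\lambda_\theta(f)}=f$ from \eqref{**} directly; these are two sides of the same coin. Your discussion of the passage from $\mathcal{S}(\mathbb{R}^d)$ to $L^1(\mathbb{R}^d)$ is more explicit than the paper's (which simply alludes to the distributional extension of $\lambda_\theta$), though the phrase ``$\tau_\theta(\lambda_\theta(f))=f(0)$ extends to $L^1$'' should be handled with care since pointwise values are not defined for $L^1$ functions; a density argument (approximating $g$ boundedly by smooth functions, so that $g_n\widehat{x}\to g\widehat{x}$ in $L^1$) would close this cleanly.
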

\begin{proof}
The mapping $\mathcal{S}(\mathbb{R}^d)\ni \widehat{x} \mapsto \lambda_{\theta}(\widehat{x})$ is continuous between $\mathcal{S}(\mathbb{R}^d)$ and $\mathcal{S}(\mathbb{R}_{\theta}^d),$ and it has an extension from $\mathcal{S}'(\mathbb{R}^d)$ to $\mathcal{S}'(\mathbb{R}_{\theta}^d).$
By Definition \ref{F-multiplier} and \eqref{**}, 
\begin{equation}\label{def_F_multip}
g(D)(\lambda_{\theta}(f)) =g(D)(x)=\int_{\mathbb{R}^d}g(s)\widehat{x}(s)U_{\theta}(s)ds 
 \overset{\eqref{**}}{=} \int_{\mathbb{R}^d}g(s)f(s)U_{\theta}(s)ds=\lambda_{\theta}(gf).
\end{equation}
Hence,
$$g(D)(x)=g(D)(\lambda_{\theta}(\widehat{x}))\overset{\eqref{def_F_multip}}{=}\lambda_{\theta}(g\widehat{x}).$$
On the other hand, we have $g(D)(x)=\lambda_{\theta}(\widehat{g(D)(x)}).$ Therefore,
$$\lambda_{\theta}(\widehat{g(D)(x)})=\lambda_{\theta}(g\widehat{x}).$$ 
So, $\widehat{g(D)(x)}=g\cdot\widehat{x}.$
This concludes the proof.
\end{proof}

The following result shows the adjoint of the Fourier multiplier defined by \eqref{Fourier-multiplier} on the quantum Euclidean space.
\begin{lem}\label{Duality}Let $1< p,q< \infty$ and let $g(D):L^p(\mathbb{R}^d_\theta)\to L^q(\mathbb{R}^d_\theta)$ be the Fourier multiplier defined by \eqref{Fourier-multiplier} with the symbol $g.$ Then its adjoint $g(D)^*:L^{q'}(\mathbb{R}^d_\theta)\to L^{p'}(\mathbb{R}^d_\theta)$ equals to $\overline{g}(D),$ where $\overline{g}$ is the complex conjugate of the function $g,$ in the sense of Definition \ref{def-adj}.  
\end{lem}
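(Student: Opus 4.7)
The plan is to work first on the Schwartz subspace $\mathcal{S}(\mathbb{R}^d_\theta)$, where all objects are well-defined without convergence issues, and then invoke density to obtain the identification of the two bounded operators on all of $L^{q'}(\mathbb{R}^d_\theta)$. Concretely, fix $x \in \mathcal{S}(\mathbb{R}^d_\theta)$ and $y \in \mathcal{S}(\mathbb{R}^d_\theta)$ and compute the bilinear pairing $\tau_\theta(g(D)(x)\,y^*)$ in two equivalent ways, one producing $(x,\overline{g}(D)(y))$.

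The first computation uses the Plancherel relation of Lemma \ref{P-relation} together with the identity $\widehat{g(D)(x)} = g\cdot\widehat{x}$ from Lemma \ref{F_trans_Fourier_trans}:
\begin{equation*}
\tau_\theta\bigl(g(D)(x)\,y^*\bigr) \;=\; \int_{\mathbb{R}^d}\widehat{g(D)(x)}(s)\,\overline{\widehat{y}(s)}\,ds \;=\; \int_{\mathbb{R}^d} g(s)\,\widehat{x}(s)\,\overline{\widehat{y}(s)}\,ds.
\end{equation*}
The second computation applies Plancherel in the opposite direction and uses that the symbol of $\overline{g}(D)$ is $\overline{g}$:
\begin{equation*}
\tau_\theta\bigl(x\,(\overline{g}(D)(y))^*\bigr) \;=\; \int_{\mathbb{R}^d}\widehat{x}(s)\,\overline{\widehat{\overline{g}(D)(y)}(s)}\,ds \;=\; \int_{\mathbb{R}^d}\widehat{x}(s)\,\overline{\overline{g}(s)\widehat{y}(s)}\,ds \;=\; \int_{\mathbb{R}^d} g(s)\,\widehat{x}(s)\,\overline{\widehat{y}(s)}\,ds.
\end{equation*}
Matching the two expressions yields $(g(D)(x),y) = (x,\overline{g}(D)(y))$ for all $x,y \in \mathcal{S}(\mathbb{R}^d_\theta)$.

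Finally, I would extend by density: since $g(D):L^p(\mathbb{R}^d_\theta)\to L^q(\mathbb{R}^d_\theta)$ is bounded by assumption, $\overline{g}(D)$ is symmetric to it under complex conjugation and the Schwartz space is dense in every $L^r(\mathbb{R}^d_\theta)$ for $1\le r<\infty$; hence the identity $(g(D)(x),y) = (x,\overline{g}(D)(y))$ persists for all $x \in L^p(\mathbb{R}^d_\theta)$ and $y \in L^{q'}(\mathbb{R}^d_\theta)$. By the uniqueness of the adjoint operator in Definition \ref{def-adj}, this forces $g(D)^* = \overline{g}(D)$ as bounded operators from $L^{q'}(\mathbb{R}^d_\theta)$ to $L^{p'}(\mathbb{R}^d_\theta)$.

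The only subtle point is that a priori $\overline{g}(D)$ is defined on the Schwartz space via \eqref{Fourier-multiplier}, so one needs to know that its extension to $L^{q'}\to L^{p'}$ actually is bounded in order to quote uniqueness of adjoints; this, however, follows from the general fact that the adjoint of a bounded operator is bounded, so the density argument automatically identifies $\overline{g}(D)$ with $g(D)^*$ on its natural domain. Beyond this bookkeeping, the proof is a direct two-line application of Plancherel plus Lemma \ref{F_trans_Fourier_trans}, and no essential obstacle arises.
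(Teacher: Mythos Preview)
Your proof is correct and follows essentially the same route as the paper: both arguments work on the Schwartz space, combine the Plancherel relation (Lemma \ref{P-relation}) with the identity $\widehat{g(D)(x)}=g\cdot\widehat{x}$ (Lemma \ref{F_trans_Fourier_trans}) to obtain $(g(D)(x),y)=(x,\overline{g}(D)(y))$, and then extend by density of $\mathcal{S}(\mathbb{R}^d_\theta)$ in $L^p(\mathbb{R}^d_\theta)$. The only cosmetic difference is that the paper writes this as a single chain of equalities rather than two computations meeting in the middle.
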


\begin{proof} Let $x,y\in \mathcal{S}(\mathbb{R}^d_{\theta}).$  Then by definition of adjoint operator we can write
\begin{eqnarray*}
( x,g(D)^*(y))\overset{\text{(\ref{def-dul-operator})}}{=}\tau_{\theta}(x(g(D)^*(y))^*)=\tau_{\theta}(g(D)(x)y^*)\overset{\text{(\ref{def-dul-operator})}}{=}(g(D)(x), y).    
\end{eqnarray*}
By formulas  \eqref{def-dul-operator},   \eqref{Plancherel-relation}, and \eqref{F_trans_Fourier_trans}, we have
\begin{eqnarray*}
( x,g(D)^*(y))&\overset{\text{(\ref{def-dul-operator})}}{=}&\tau_{\theta}(x(g(D)^*(y))^*)=\tau_{\theta}(g(D)(x)y^*)\\
&\overset{\text{(\ref{Plancherel-relation}) }}{=}& \int_{\mathbb{R}^d} \widehat{g(D)(x)}(s)\overline{\widehat{y}(s)}ds\\
&\overset{\text{(\ref{F_trans_Fourier_trans}) }}{=} &\int_{\mathbb{R}^d} g(s)\widehat{x}(s)\overline{\widehat{y}(s)}ds\\
&=&\int_{\mathbb{R}^d} \widehat{x}(s)\overline{\overline{g(s)}\widehat{y}(s)}ds\\
&\overset{\text{(\ref{F_trans_Fourier_trans}) }}{=}&
\int_{\mathbb{R}^d} \widehat{x}(s)\overline{\widehat{\overline{g}(D)(y)}(s)}ds\\
&\overset{\text{(\ref{Plancherel-relation})}}{=}& \tau_{\theta}(x(\overline{g}(D)(y))^*)=( x,\overline{g}(D)(y)), \quad x,y\in\mathcal{S}(\mathbb{R}^d),
\end{eqnarray*}
where 
$ \overline{g}(D)(y):=\int_{\mathbb{R}^d} \overline{g(s)}\widehat{y}(s)U_{\theta}(s)ds.$

Since $\mathcal{S}(\mathbb{R}^d_{\theta})$ is dense in $L^p(\mathbb{R}^d_\theta)$ (see \cite[Remark 3.14, p. 515]{MSX}) for $1\leq p\leq \infty$, we have  $T^*_{g}=T_{\overline{g}}.$
\end{proof}
The next theorem is the main result of this section, which shows the $L^p\to L^q$ boundedness of the Fourier multiplier on the quantum Euclidean space. This is a quantum  analogue of \cite[Theorem 1.11]{Hor} on the quantum  Euclidean space.
\begin{thm}\label{H-multiplier theorem-th}(H\"ormander multiplier theorem). Let $1 < p \leq 2 \leq q < \infty.$ If $g:\mathbb{R}^d\to\mathbb{C}$ is a measurable function such that
$$\sup\limits_{t>0}t\left(\int\limits_{|g(\xi)|\geq t}d\xi\right)^{\frac{1}{p}-\frac{1}{q}}<\infty,$$
then the Fourier multiplier defined by \eqref{Fourier-multiplier} with the symbol $g$ is extended to a bounded map from $L^p(\mathbb{R}^d_\theta)$ to  $L^q(\mathbb{R}^d_\theta),$ and we have
\begin{eqnarray}\label{H-multiplier theorem}
\|g(D)\|_{L^p(\mathbb{R}^d_\theta) \rightarrow L^q(\mathbb{R}^d_\theta)}\lesssim \sup\limits_{t>0}t\left(\int\limits_{|g(\xi)|\geq t}d\xi\right)^{\frac{1}{p}-\frac{1}{q}}.
 \end{eqnarray} 
\end{thm}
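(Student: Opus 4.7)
The plan is to combine the reverse Hausdorff--Young inequality (Proposition \ref{R-H-Y_ineq}), the Hausdorff--Young--Paley inequality (Theorem \ref{D_H-Y-P_ineq}), the identity $\widehat{g(D)(x)} = g\cdot\widehat{x}$ from Lemma \ref{F_trans_Fourier_trans}, and the adjoint identification $g(D)^{*} = \overline{g}(D)$ from Lemma \ref{Duality}. I would split the range into the sub-cases $q \leq p'$ (direct) and $q > p'$ (handled by duality), concentrating all the analytic work in the first sub-case.

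In the first sub-case, fix $x \in \mathcal{S}(\mathbb{R}^d_\theta)$. Since $q \geq 2$, Proposition \ref{R-H-Y_ineq} applied to $g(D)(x)$, together with Lemma \ref{F_trans_Fourier_trans}, gives
\begin{equation*}
\|g(D)(x)\|_{L^q(\mathbb{R}^d_\theta)} \;\leq\; \|\widehat{g(D)(x)}\|_{L^{q'}(\mathbb{R}^d)} \;=\; \|g\cdot\widehat{x}\|_{L^{q'}(\mathbb{R}^d)}.
\end{equation*}
The next step is to estimate the right-hand side by Theorem \ref{D_H-Y-P_ineq} with the choices $r := q'$ and weight $h := |g|^{r_{0}}$, where $r_{0}$ is defined by $1/r_{0} = 1/p - 1/q$. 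The admissibility requirement $p \leq r \leq p'$ of Theorem \ref{D_H-Y-P_ineq} becomes $p \leq q' \leq p'$, which matches exactly this sub-case. Because $r(1/r - 1/p') = q'(1/q' - 1/p') = q'/r_{0}$, the weighted left-hand side in Theorem \ref{D_H-Y-P_ineq} collapses to $\|g\cdot\widehat{x}\|_{L^{q'}(\mathbb{R}^d)}^{q'}$, while the substitution $s = t^{1/r_{0}}$ gives $M_h = \sup_{s>0} s^{r_{0}}\,|\{\xi:|g(\xi)|\geq s\}|$, so that $M_h^{1/r - 1/p'} = M_h^{1/r_{0}}$ is exactly the right-hand side of \eqref{H-multiplier theorem}. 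Chaining the two estimates closes the first sub-case.

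For $q > p'$, I would invoke Lemma \ref{Duality}: the adjoint $g(D)^{*} = \overline{g}(D)$ maps $L^{q'}(\mathbb{R}^d_\theta)$ into $L^{p'}(\mathbb{R}^d_\theta)$ with the same operator norm. Setting $\tilde p := q'$ and $\tilde q := p'$, one still has $1 < \tilde p \leq 2 \leq \tilde q < \infty$, and now $\tilde q = p' \leq q = \tilde p'$, placing $\overline{g}(D)$ squarely in the first sub-case already proved. Since $|\overline{g}| = |g|$ and $1/\tilde p - 1/\tilde q = 1/p - 1/q$, the bound obtained for $\|\overline{g}(D)\|_{L^{q'}\to L^{p'}}$ has precisely the right-hand side of \eqref{H-multiplier theorem}, and duality transfers it to $g(D)$. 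Density of $\mathcal{S}(\mathbb{R}^d_\theta)$ in $L^p(\mathbb{R}^d_\theta)$ then extends the estimate from $\mathcal{S}(\mathbb{R}^d_\theta)$ to the whole of $L^p(\mathbb{R}^d_\theta)$.

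The entire difficulty here is bookkeeping with exponents: choosing $r = q'$ and $h = |g|^{r_0}$ so that the weighted left-hand side of Theorem \ref{D_H-Y-P_ineq} telescopes onto $\|g\cdot\widehat{x}\|_{L^{q'}(\mathbb{R}^d)}$ while $M_h^{1/r - 1/p'}$ reduces to the weak-$L^{r_{0}}$ quantity in \eqref{H-multiplier theorem}, and then verifying that the admissibility range $p \leq q' \leq p'$ covers $q \leq p'$ directly and $q > p'$ after passage to the adjoint. No new analytic input beyond the inequalities already established in Sections 3 and 4 is needed.
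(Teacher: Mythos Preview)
Your proposal is correct and follows essentially the same route as the paper's own proof: the paper also splits into the two sub-cases $p\leq q'$ (equivalently $q\leq p'$) and $q'\leq p$ (equivalently $q>p'$), handles the first by chaining Proposition~\ref{R-H-Y_ineq} with Theorem~\ref{D_H-Y-P_ineq} applied with $r=q'$ and $h=|g|^{s}$ where $1/s=1/p-1/q$, and dispatches the second by the duality $g(D)^*=\overline{g}(D)$ from Lemma~\ref{Duality}. Your exponent bookkeeping (the collapse of the weighted norm to $\|g\cdot\widehat{x}\|_{L^{q'}}$ and the identification $M_h^{1/r-1/p'}$ with the right-hand side of \eqref{H-multiplier theorem}) matches the paper's computations line for line.
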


\begin{proof} By duality it is sufficient to study two cases: $1<p\leq q'\leq 2$ and $1<q'\leq p \leq 2,$ where $1=\frac{1}{q}+\frac{1}{q'}.$ Also, since $\mathcal{S}(\mathbb{R}^d_{\theta})$ is dense in $L^p(\mathbb{R}^d_\theta)$ for any $1\leq p< \infty$ (see \cite{GJP}), it is sufficient to prove the result for any $x$ in $\mathcal{S}(\mathbb{R}^d_{\theta}).$ 

First, we consider the case $1<p\leq q'\leq 2,$ where $1=\frac{1}{q}+\frac{1}{q'}.$ By Lemma \ref{F_trans_Fourier_trans} we have
 \begin{equation}\label{neccesay-lemma}\widehat{g(D)(x)}=g\cdot\widehat{x}, \quad x\in \mathcal{S}(\mathbb{R}^d_{\theta}).
 \end{equation}
Then, it follows from Proposition \ref{R-H-Y_ineq} and \eqref{neccesay-lemma}  that  
\begin{eqnarray}\label{additive5.3}
\|g(D)(x)\|_{L^q(\mathbb{R}^d_\theta)} \overset{\eqref{R_H-Y_ineq}}{\leq} \|\widehat{g(D)(x)}\|_{L^{q'}(\mathbb{R}^d)} 
 \overset{ \eqref{neccesay-lemma}}{=} \|g\widehat{x}\|_{L^{q'}(\mathbb{R}^d)}.  
  \end{eqnarray}
  
Therefore, if we set $q':=r$ and $\frac{1}{s}:=\frac{1}{p}-\frac{1}{q}=\frac{1}{q'}-\frac{1}{p'},$ then for $h(\xi):=|g(\xi)|^{s}, \xi\in \mathbb{R}^{d},$ we are in a position to apply the Hausdorff-Young-Paley inequality in Theorem \ref{D_H-Y-P_ineq}. In other words, we obtain
 \begin{eqnarray}\label{additive5.4}
\left(\int_{\mathbb{R}^d} \Big(|\widehat{x}(\xi)|\cdot|g(\xi)|\Big)^{q'}d\xi\right)^\frac{1}{q'}\lesssim M^\frac{1}{s}_{|g|^{s}}\|x\|_{L^p(\mathbb{R}^d_\theta)}
 \end{eqnarray} 
for any $x\in L^p(\mathbb{R}^d_\theta).$ 
Let us study $M^\frac{1}{s}_{|g|^{s}}$ separately. Indeed, by definition 
$$M^\frac{1}{s}_{|g|^{s}}:=\left(\sup\limits_{t>0}t\int\limits_{|g(\xi)|^{s}\geq t}d \xi\right)^{\frac{1}{s}}=\left(\sup\limits_{t>0}t\int\limits_{  |g(\xi)|\geq t^{\frac{1}{s}}}d \xi\right)^{\frac{1}{s}}=\left(\sup\limits_{t>0}t^{s}\int\limits_{|g(\xi)|\geq t}d \xi\right)^{\frac{1}{s}}.$$ Since $\frac{1}{s}:=\frac{1}{p}-\frac{1}{q},$ it follows that
\begin{eqnarray}\label{M_g_estimate}
M^\frac{1}{s}_{|g|^{s}}&=&\left(\sup\limits_{t>0}t^{s}\int\limits_{|g(\xi)|\geq t}d \xi\right)^{\frac{1}{p}-\frac{1}{q}}\nonumber\\&=&\sup\limits_{t>0}t^{s\big(\frac{1}{p}-\frac{1}{q}\big)}\left(\int\limits_{|g(\xi)|\geq t}d \xi\right)^{\frac{1}{p}-\frac{1}{q}}\nonumber\\&=&\sup\limits_{t>0}t\left(\int\limits_{|g(\xi)|\geq t}d \xi\right)^{\frac{1}{p}-\frac{1}{q}}.
 \end{eqnarray}

Hence, combining (\ref{additive5.3}), (\ref{additive5.4}), and \eqref{M_g_estimate} we obtain
\begin{eqnarray*}
\|g(D)(x)\|_{L^q(\mathbb{R}^d_\theta)}  
 &\overset{\text{(\ref{additive5.3})}}{\lesssim}&\left(\int\limits_{\mathbb{R}^d}\Big(|\widehat{x}(\xi)|\cdot|g(\xi)|\Big)^{q'}d\xi \right)^\frac{1}{q'}  \\&\overset{ \text{(\ref{additive5.4})}} {\lesssim}& M^\frac{1}{s}_{|g|^{s}}\|x\|_{L^p\left(\mathbb{R}^d_\theta\right)} \\&\overset{ \text{\eqref{M_g_estimate}}}{=} & \sup\limits_{t>0}t\left(\int\limits_{|g(\xi)|\geq t}d \xi\right)^{\frac{1}{p}-\frac{1}{q}}\|x\|_{L^p(\mathbb{R}^d_\theta)},  
\end{eqnarray*}
for $1 < p \leq q'\leq 2.$ 

Next, we consider the case $q'\leq p\leq2$ so that $p'\leq (q')'=q,$ where $1=\frac{1}{q}+\frac{1}{q'}$ and $1=\frac{1}{p}+\frac{1}{p'}.$ Thus, the $L^p$-duality (see Lemma \ref{Duality}) yields that $g(D)^{*}=\overline{g}(D)$ and
\begin{eqnarray*}
 \|g(D)\|_{L^p(\mathbb{R}^d_\theta) \rightarrow L^q(\mathbb{R}^d_\theta)}= \|\overline{g}(D)\|_{L^{q'}(\mathbb{R}^d_\theta) \rightarrow L^{p'}(\mathbb{R}^d_\theta)}.  
\end{eqnarray*}
The symbol of the adjoint operator $g(D)^{*}$ equals to $\overline{g}$ and
obviously we have $|g(\xi)|=|\overline{g(\xi)}|$ for all $\xi\in\mathbb{R}^{d}.$ Set $\frac{1}{p}-\frac{1}{q}=\frac{1}{s}=\frac{1}{q'}-\frac{1}{p'}.$ Hence, by repeating the argument in the previous case we have
\begin{eqnarray*}
\|\overline{g}(D)(x)\|_{L^{p'}(\mathbb{R}^d_\theta)}  
 &\lesssim& \sup\limits_{t>0}t\left(\int\limits_{|\overline{g(\xi)}|\geq t}d \xi\right)^{\frac{1}{q'}-\frac{1}{p'}}\|x\|_{L^{q'}(\mathbb{R}^d_\theta)}\\
 &=& \sup\limits_{t>0}t\left(\int\limits_{|g(\xi)|\geq t}d \xi\right)^{\frac{1}{p}-\frac{1}{q}}\|x\|_{L^{q'}(\mathbb{R}^d_\theta)}.  
\end{eqnarray*}
In other words, we have 
$$\|\overline{g}(D)\|_{L^{q'}(\mathbb{R}^d_\theta) \rightarrow L^{p'}(\mathbb{R}^d_\theta)}\lesssim
\sup\limits_{t>0}t\left(\int\limits_{|g(\xi)|\geq t}d\xi\right)^{\frac{1}{p}-\frac{1}{q}}.$$ Combining both cases, we obtain
$$\|g(D)\|_{L^{p}(\mathbb{R}^d_\theta) \rightarrow L^{q}(\mathbb{R}^d_\theta)}\lesssim
\sup\limits_{t>0}t\left(\int\limits_{|g(\xi)|\geq t}d\xi\right)^{\frac{1}{p}-\frac{1}{q}}$$
for all $1 < p \leq 2 \leq q < \infty.$ This concludes the proof.
\end{proof}

As an application of Theorem \ref{H-multiplier theorem-th} we will show the $L^p -L^q$ estimate for the heat semi-group on quantum Euclidean spaces. 
Let us define the heat semi-groups
of operators on $L^2(\mathbb{R}_{\theta}^{d})$ by 
$t\mapsto e^{t\Delta_{\theta}},$ where $\Delta_{\theta}$ is the Laplacian defined by \eqref{laplacian}.
This operator can be defined as the Fourier multiplier. Observe that by definition (see \eqref{Fourier-multiplier}), it is an operator of the form $g(D)$ with the symbol $g(\xi)=e^{-t|\xi|^2}, \,\xi \in \mathbb{R}^d, \, t>0.$ Then a simple application of Theorem \ref{H-multiplier theorem-th} gives us the $L^p -L^q$ estimate of the heat semi-group.
\begin{cor}\label{heat-kernel}
\label{heat-semigroup} Let $1 < p \leq 2 \leq q < \infty.$ We have 
$$\|e^{t\Delta_{\theta}}\|_{L^{p}(\mathbb{R}^d_\theta) \rightarrow L^{q}(\mathbb{R}^d_\theta)}\leq c_{p,q} t^{-\frac{d}{2}\big(\frac{1}{p}-\frac{1}{q}\big)},\,\ t>0.
$$
\end{cor}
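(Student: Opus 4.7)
The plan is to apply Theorem \ref{H-multiplier theorem-th} directly to the symbol $g(\xi) = e^{-t|\xi|^{2}}$, so the entire task reduces to evaluating the H\"ormander-type quantity
\[
Q(t) \;:=\; \sup_{s>0}\, s\left(\int_{|g(\xi)|\geq s} d\xi\right)^{\frac{1}{p}-\frac{1}{q}}.
\]
Since $g$ is already bounded by $1$, the sublevel condition $e^{-t|\xi|^{2}}\geq s$ is vacuous for $s>1$ and, for $0<s\leq 1$, reduces to $|\xi|\leq \sqrt{-\log s / t}$. Thus the set $\{|g(\xi)|\geq s\}$ is a Euclidean ball of radius $\sqrt{-\log s /t}$, so its Lebesgue measure equals $\omega_{d}\bigl(-\log s / t\bigr)^{d/2}$, where $\omega_{d}$ denotes the volume of the unit ball in $\mathbb{R}^{d}$.

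Next I would substitute this volume into $Q(t)$, set $\alpha := \frac{d}{2}\bigl(\frac{1}{p}-\frac{1}{q}\bigr)\geq 0$, and pull the $t$-factor outside the supremum to obtain
\[
Q(t) \;=\; \omega_{d}^{\frac{1}{p}-\frac{1}{q}}\, t^{-\alpha}\, \sup_{0<s\leq 1} s\,(-\log s)^{\alpha}.
\]
The one-dimensional supremum is then handled by the change of variables $u = -\log s$, which turns it into $\sup_{u\geq 0} u^{\alpha} e^{-u} = (\alpha/e)^{\alpha}$ (interpreted as $1$ when $\alpha=0$); in particular it is a finite constant depending only on $d$, $p$, $q$. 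Combining with Theorem \ref{H-multiplier theorem-th} gives the desired bound with a constant $c_{p,q}$ absorbing $\omega_{d}^{1/p-1/q}$ and this one-variable supremum.

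There is no real obstacle here: the only small technical point is to check that the Fourier multiplier $e^{t\Delta_\theta}$ as defined via the functional calculus on $L^{2}(\mathbb{R}^{d}_{\theta})$ coincides with the operator $g(D)$ from Definition \ref{F-multiplier} with symbol $g(\xi)=e^{-t|\xi|^{2}}$. This is immediate from $\partial^{\theta}_{j}\lambda_{\theta}(f) = \lambda_{\theta}(it_{j}f)$, which yields $\Delta_{\theta}\lambda_{\theta}(f) = \lambda_{\theta}(-|t|^{2}f)$ and hence, by Lemma \ref{F_trans_Fourier_trans} and the spectral calculus, $e^{t\Delta_{\theta}}x = g(D)x$ for $x\in \mathcal{S}(\mathbb{R}^{d}_{\theta})$. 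Once this identification is recorded, the proof amounts solely to the elementary volume computation and the elementary one-variable supremum above.
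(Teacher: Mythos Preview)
Your proof is correct and follows essentially the same approach as the paper: both apply Theorem \ref{H-multiplier theorem-th} to the symbol $g(\xi)=e^{-t|\xi|^{2}}$ and reduce to computing the H\"ormander quantity, with your version simply making explicit the ``simple calculation'' the paper alludes to (including the exact constant $(\alpha/e)^{\alpha}\omega_d^{1/p-1/q}$ via the substitution $u=-\log s$). The identification $e^{t\Delta_\theta}=g(D)$ that you justify is also exactly what the paper records in the paragraph preceding the corollary.
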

\begin{proof}By Theorem \ref{H-multiplier theorem-th} we have
 $$\|e^{t\Delta_{\theta}}\|_{L^{p}(\mathbb{R}^d_\theta) \rightarrow L^{q}(\mathbb{R}^d_\theta)}\leq c_{p,q}\cdot\sup\limits_{s>0}s\left(\int\limits_{  e^{-t|\xi|^2}\geq s}d\xi\right)^{\frac{1}{p}-\frac{1}{q}}.$$ 
 Then a simple calculation shows that the right hand side is equivalent to the function
 $t^{-\frac{d}{2}\big(\frac{1}{p}-\frac{1}{q}\big)}, \, t>0.$ This completes the proof.
\end{proof}

\section{Another approach for $L^p–L^q$ boundedness}\label{5s}
In this section, we obtain the $L^p -L^q$ boundedness of the Fourier multipliers on quantum Euclidean space with a different approach. In other words, we present another proof of H\"ormander multiplier Theorem \ref{H-multiplier theorem-th}. The idea comes from \cite{Z}, where the author obtained similar results in the context of so called locally compact quantum groups. 

\begin{lem}\label{L.5.1} (Hausdorff-Young inequality).   Let $1\leq{p}\leq2$  and $\frac{1}{p}+\frac{1}{p'}=1$. Then for any $f\in L^{p,p'}(\mathbb{R}^d)$ we have
\begin{eqnarray}\label{H-L_ineq}
\|\lambda_{\theta}(f)\|_{L^{p'}(\mathbb{R}_{\theta}^d )}  \lesssim   \|f\|_{L^{p,p'}(\mathbb{R}^d)}.
\end{eqnarray}
\end{lem}
\begin{proof}   We  consider the   operator  $\lambda_{\theta}(\cdot) $ defined by the formula \eqref{def-integration}.    Clearly, it is linear due to the linearity of the integral. Furthermore, by formula \eqref{def-integration}, we obtain  
\begin{eqnarray*} 
\|\lambda_{\theta}(\cdot)\|_{L^\infty(\mathbb{R}_{\theta}^d )}  &\overset{ \text{(\ref{def-integration})} }{\leq} &\int_{\mathbb{R}^d}|f(t)|\|U_{\theta}(t)\|_{L^\infty(\mathbb{R}_{\theta}^d )} dt 
=\int_{\mathbb{R}^d}|f(t)| dt= \|f\|_{L^1(\mathbb{R}^d)},\quad f\in L^{1}(\mathbb{R}^{d}). 
\end{eqnarray*}  
Moreover, by the Plancherel identity \eqref{Plancherel}, we have    
\begin{equation*} 
\|\lambda_{\theta}(f)\|_{L^{2}(\mathbb{R}^{d}_{\theta})}=\|f\|_{L^{2}(\mathbb{R}^{d})},\quad f\in L^{2}(\mathbb{R}^{d}).
\end{equation*}
Thus, $\lambda_{\theta}(\cdot) $ is bounded from  $L^{1,1}(\mathbb{R}^{d}_{\theta})$ to $L^{\infty,\infty}(\mathbb{R}^{d})$ and from $L^{2,2}(\mathbb{R}^d_\theta)$ to $L^{2,2}(\mathbb{R}^d).$ Hence, the assertion follows from the noncommutative Marcinkiewicz theorem (see \cite[Theorem 7.8.2, p. 434]{DPS}).  
\end{proof}

\begin{lem}\label{L.5.2} Let $1\leq p\leq2$ with $\frac{1}{p}+\frac{1}{p'}=1.$ Then for any $x\in L^{p}(\mathbb{R}_{\theta}^d)$ we have
 \begin{eqnarray}\label{R-H-Y_ineq}
\|\widehat{x}\|_{L^{p',p}(\mathbb{R}^d)} \leq \|x\|_{L^{p}(\mathbb{R}_{\theta}^d )}.
\end{eqnarray}
 \end{lem}
\begin{proof}The idea of the proof is similar to that of Theorem \ref{H-L_ineq}. But, the main question is to define the operator correctly. Let us define the map $T$ by
$$
T: x\mapsto \widehat{x}:=f,
$$
which is well defined on $L^{1}(\mathbb{R}^d)$ into $L^{\infty}(\mathbb{R}^d_\theta).$ Indeed, Applying the noncommutative H\"{o}lder inequality with respect to $p=1$ and $p'=\infty,$ we have
\begin{eqnarray*}
\|\widehat{x}\|_{L^{\infty}(\mathbb{R}^d)}=\ess\sup\limits_{t\in\mathbb{R}^d}|\tau_{\theta}(xU_{\theta}(t)^{*}|\leq \|x\|_{L^1(\mathbb{R}_{\theta}^d)}\|U_{\theta}(t)\|_{L^{\infty}(\mathbb{R}^d_\theta)}=\|x\|_{L^1(\mathbb{R}_\theta^d)}. 
\end{eqnarray*}
On the other hand, Plancherel’s (Parseval’s) identity \eqref{Plancherel} exhibits an isometry $T$ from $L^2(\mathbb{R}^d_\theta)$ onto $L^2(\mathbb{R}^d)$ by $T(x)=\widehat{x}$ (see, Remark \ref{Remark}). Interpolating as in Lemma  \ref{L.5.1}, we obtain that 
$T$ is extended to a linear bounded map from $L^p(\mathbb{R}^d_\theta)$ to $L^{p',p}(\mathbb{R}^d).$ 
In other words, if $x\in L^{p}(\mathbb{R}_{\theta}^d),$ then $\widehat{x}=f$  belongs to $L^{p',p}(\mathbb{R}^d),$ and we have
$$
\|\widehat{x}\|_{L^{p',p}(\mathbb{R}^d)}\leq \|x\|_{L^{p}(\mathbb{R}_{\theta}^d)},
$$
thereby completing the proof.
\end{proof}

\begin{thm} Let  $1<p\leq2\leq q<\infty$ and $g\in L^{r,\infty}(\mathbb{R}^d)$ with $\frac{1}{r}= \frac{1}{p} - \frac{1}{q}.$ Then the Fourier multiplier defined by \eqref{Fourier-multiplier} is extended to a bounded map from $L^p(\mathbb{R}^d_\theta)$ to  $L^q(\mathbb{R}^d_\theta),$ and we have
$$
\|g(D)\|_{L^p(\mathbb{R}^d_\theta) \rightarrow L^q(\mathbb{R}^d_\theta)}\lesssim \|g\|_{L^{r,\infty}(\mathbb{R}^d)}.$$ 
\end{thm}
\begin{proof} Since $\frac{1}{r}:=\frac{1}{p} - \frac{1}{q},$ we have that $\frac{1}{q'}=\frac{1}{r} +\frac{1}{p'}.$  By definition of the Fourier multiplier \eqref{Fourier-multiplier}
$$g(D)(x)=\lambda_{\theta}(g\cdot \widehat{x}).$$
Then for any $g\in L^{r,\infty}(\mathbb{R}^d_\theta)$ and
$x\in L^p(\mathbb{R}^d_\theta),$ by Lemmas \ref{L.5.1} and \ref{L.5.2}, and Proposition \ref{G-Prop}, we have
$$\|g(D)(x)\|_{L^q(\mathbb{R}^d_\theta)}= \|\lambda_{\theta}(g\cdot \widehat{x})\|_{L^q(\mathbb{R}^d_\theta)}\overset{\eqref{H-L_ineq}}{\lesssim}\|g\cdot \widehat{x}\|_{L^{q',q}(\mathbb{R}^d)}\overset{\eqref{Lorentz-embedding-2}}{\lesssim}\|g\|_{L^{r,\infty}(\mathbb{R}^d)}\|\widehat{x}\|_{L^{p',q}(\mathbb{R}^d)}$$
$$\overset{\eqref{Lorentz-embedding-1}}{\lesssim}\|g\|_{L^{r,\infty}(\mathbb{R}^d)}\|\widehat{x}\|_{L^{p',p}(\mathbb{R}^d)}\overset{\eqref{R-H-Y_ineq}}{\lesssim}\|g\|_{L^{r,\infty}(\mathbb{R}^d)}\|x\|_{L^{p}(\mathbb{R}_{\theta}^d)},$$
thereby completing the proof.
\end{proof}

\section{Sobolev and logarithmic Sobolev inequalities on quantum Euclidean spaces}\label{sc8}
In this section, we obtain Sobolev and logarithmic Sobolev type inequalities on quantum Euclidean spaces. 

As another consequence of Theorem \ref{H-multiplier theorem-th}, we obtain the following Sobolev type embedding.  
\begin{thm}\label{sobolev-embed}
\label{heat-semigroup} Let $1 < p \leq 2 \leq q < \infty.$ Then for any $x\in L_{s}^{p}(\mathbb{R}^d_\theta)$ the inequality  
$$
\|x\|_{L^{q}(\mathbb{R}^d_\theta)}\leq c_{p,q,s} \|x\|_{L_{s}^{p}(\mathbb{R}^d_\theta)},
$$
where $c_{p,q,s}>0$ is a constant independent of $x,$ holds true provided that
$s\geq d\left(\frac{1}{p}-\frac{1}{q}\right),$ $p\neq{q}.$ 
\end{thm}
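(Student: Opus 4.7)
The plan is to reduce the Sobolev embedding to the Hörmander-type multiplier theorem (Theorem \ref{H-multiplier theorem-th}). The key observation is that, by the definition of the Bessel potential, for $y = J_{\theta}^{s} x$ we have $\|y\|_{L^{p}(\mathbb{R}^{d}_{\theta})} = \|x\|_{L_{s}^{p}(\mathbb{R}^{d}_{\theta})}$, and $x = J_{\theta}^{-s} y$. Since $J_{\theta} = (1-\Delta_{\theta})^{1/2}$ and, in view of \eqref{laplacian} together with the identity $\partial_{j}^{\theta}\lambda_{\theta}(f)=\lambda_{\theta}(it_{j}f)$, the operator $-\Delta_{\theta}$ corresponds on the Fourier side to multiplication by $|\xi|^{2}$, the operator $J_{\theta}^{-s}$ is precisely the Fourier multiplier $g(D)$ with symbol
\[
g(\xi) = (1+|\xi|^{2})^{-s/2}, \qquad \xi \in \mathbb{R}^{d}.
\]

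The strategy is then simply to apply Theorem \ref{H-multiplier theorem-th} to this $g$. This immediately yields
\[
\|x\|_{L^{q}(\mathbb{R}^{d}_{\theta})} = \|g(D)y\|_{L^{q}(\mathbb{R}^{d}_{\theta})}
\lesssim \sup_{t>0} t\left(\int_{|g(\xi)|\geq t} d\xi\right)^{\tfrac{1}{p}-\tfrac{1}{q}} \|y\|_{L^{p}(\mathbb{R}^{d}_{\theta})},
\]
so it remains only to verify that the quantity on the right is finite under the assumption $s \geq d\bigl(\tfrac{1}{p} - \tfrac{1}{q}\bigr)$.

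To check the Hörmander condition, I would analyse the level sets of $g$. Since $|g(\xi)| \leq 1$, for $t \geq 1$ the set $\{\xi : |g(\xi)| \geq t\}$ is empty, so only $0 < t < 1$ matters. For such $t$, the inequality $(1+|\xi|^{2})^{-s/2} \geq t$ is equivalent to $|\xi|^{2} \leq t^{-2/s}-1$, so
\[
\int_{|g(\xi)|\geq t} d\xi = \omega_{d}\bigl(t^{-2/s}-1\bigr)^{d/2} \lesssim t^{-d/s},
\]
where $\omega_{d}$ is the volume of the unit ball. Plugging this into the supremum yields
\[
\sup_{0<t<1} t\left(\int_{|g(\xi)|\geq t} d\xi\right)^{\tfrac{1}{p}-\tfrac{1}{q}}
\lesssim \sup_{0<t<1} t^{\,1 - \tfrac{d}{s}\left(\tfrac{1}{p}-\tfrac{1}{q}\right)},
\]
which is finite precisely when $1 - \tfrac{d}{s}\bigl(\tfrac{1}{p}-\tfrac{1}{q}\bigr) \geq 0$, i.e.\ $s \geq d\bigl(\tfrac{1}{p}-\tfrac{1}{q}\bigr)$. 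This is exactly the hypothesis.

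There is no serious obstacle; the only point that deserves care is the identification of $J_{\theta}^{-s}$ as the Fourier multiplier $g(D)$ in the sense of Definition \ref{F-multiplier}, which follows from Lemma \ref{F_trans_Fourier_trans} together with the Fourier-side description of $\partial_{j}^{\theta}$. Everything else is the bookkeeping above, and the sharp threshold $s = d\bigl(\tfrac{1}{p} - \tfrac{1}{q}\bigr)$ appears naturally as the critical exponent for which the weak-type $L^{d/s,\infty}$ bound on $g$ just barely holds.
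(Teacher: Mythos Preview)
Your proposal is correct and follows essentially the same route as the paper: identify $J_{\theta}^{-s}$ with the Fourier multiplier with symbol $g(\xi)=(1+|\xi|^{2})^{-s/2}$, apply Theorem~\ref{H-multiplier theorem-th}, and estimate the H\"ormander quantity by bounding the radius of the level set $\{|g(\xi)|\ge t\}$ by $t^{-1/s}$ for $0<t\le 1$. The paper's argument is exactly this, so there is nothing to add.
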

\begin{proof} Let us consider $g(\xi)=(1+|\xi|^2)^{- s/2}, \quad \xi\in\mathbb{R}^d$ and $s>0.$ Take $g(D)=(1-\Delta_{\theta})^{- s/2}.$ In order to use Theorem \ref{H-multiplier theorem-th}, we need first to calculate the right hand side of \eqref{H-multiplier theorem}. For this, we have
\begin{eqnarray*}
\sup\limits_{t>0}t\left(\int\limits_{|g(\xi)|\geq t}d\xi\right)^{\frac{1}{p}-\frac{1}{q}}&=& \sup\limits_{ t>0}t\left(\nu\{\xi\in\mathbb{R}^d: \frac{1}{(1+|\xi|^2)^{-s/2}}\geq t\}\right)^{\frac{1}{p}-\frac{1}{q}},
\end{eqnarray*}
where $\nu$ is the Lebesgue measure on $\mathbb{R}^d.$ 
Since 
\begin{eqnarray*}
\nu\{\xi\in\mathbb{R}^d: \frac{1}{(1+|\xi|^2)^{-s/2}}\geq t\}
= \begin{cases} 0, \,\ \text{if} \,\ 1<t<\infty, \\
\nu\{\xi\in\mathbb{R}^d: |\xi|\leq \sqrt{t^{-2/s}-1}\},\,\ \text{if} \,\ 0<t\leq 1,
    \end{cases}
\end{eqnarray*}
it follows that
\begin{eqnarray*}
\sup\limits_{t>0}t\left(\int\limits_{|g(\xi)|\geq t}d\xi\right)^{\frac{1}{p}-\frac{1}{q}}&=& \sup\limits_{0<t\leq 1}t\left(\nu\{\xi\in\mathbb{R}^d: \frac{1}{(1+|\xi|^2)^{-s/2}}\geq t\}\right)^{\frac{1}{p}-\frac{1}{q}}\\
&=& \sup\limits_{0<t\leq 1}t\left(\nu\{\xi\in\mathbb{R}^d: |\xi|\leq \sqrt{t^{-2/s}-1}\}\right)^{\frac{1}{p}-\frac{1}{q}}\\
&\leq&\sup\limits_{0<t\leq 1}t\left(\nu\{\xi\in\mathbb{R}^d: |\xi|\leq t^{-1/s}\}\right)^{\frac{1}{p}-\frac{1}{q}}\\
&\lesssim&\sup\limits_{0<t\leq 1}t^{1-\frac{d}{s}(\frac{1}{p}-\frac{1}{q})}<\infty,
\end{eqnarray*}
whenever $s\geq d\left(\frac{1}{p}-\frac{1}{q}\right), p\neq q.$ 
Thus, the function $g$ satisfies the assumptions of Theorem \ref{H-multiplier theorem-th}, and consequently, we have \begin{eqnarray*}
\|x\|_{L^{q}(\mathbb{R}^d_\theta)}&=& \|(1-\Delta_{\theta})^{-s/2}(1-\Delta_{\theta})^{s/2}x\|_{L^{q}(\mathbb{R}^d_\theta)}\\
&\overset{\eqref{H-multiplier theorem}}{\leq}& c_{p,q,s} \sup\limits_{t>0}t\left(\int\limits_{|g(\xi)|\geq t}d\xi\right)^{\frac{1}{p}-\frac{1}{q}}\|(1-\Delta_{\theta})^{s/2}x\|_{L^{p}(\mathbb{R}^d_{\theta})}\\
&\leq &c_{p,q,s}\|x\|_{L_{s}^{p}(\mathbb{R}^d_\theta)}
\end{eqnarray*}
for $s\geq d\left(\frac{1}{p}-\frac{1}{q}\right)$ and $x\in L_{s}^{p}(\mathbb{R}^d_\theta),$
thereby completing the proof.
\end{proof} 
We also obtain the following simple consequence of the noncommutative H\"{o}lder inequality.
\begin{lem}\label{inter-lem}Let $1\leq p \leq r \leq  q\leq \infty$   be such that $\frac{1}{r}=\frac{\eta}{p}+\frac{1-\eta}{q}$ for some $\eta\in(0,1).$ Then for any $x\in L^q(\mathbb{R}^d_\theta)\cap L^p(\mathbb{R}^d_\theta),$ we have
\begin{equation}\label{inter-lem1}
\|x\|_{L^r(\mathbb{R}^d_\theta)}\leq \|x\|^{\eta}_{L^p(\mathbb{R}^d_\theta)}\|x\|^{1-\eta}_{L^q(\mathbb{R}^d_\theta)}.
 \end{equation}
\end{lem}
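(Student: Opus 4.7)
The plan is to prove this interpolation inequality directly from the noncommutative Hölder inequality applied to powers of $|x|$, which is the standard route and works in exactly the same way as in the commutative case. The hypothesis $\frac{1}{r}=\frac{\eta}{p}+\frac{1-\eta}{q}$ rewrites as $\frac{\eta r}{p}+\frac{(1-\eta)r}{q}=1$, so the exponents $p_1:=\frac{p}{\eta r}$ and $p_2:=\frac{q}{(1-\eta)r}$ are Hölder conjugates. This is the exact identity needed to match the factorisation $|x|^r=|x|^{\eta r}\cdot|x|^{(1-\eta)r}$ to a Hölder pairing.

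First I would reduce the statement to the case $p,q<\infty$; the case $q=\infty$ forces $\eta r=p$ and $(1-\eta)r=r-p$, and follows from $\tau_\theta(|x|^r)=\tau_\theta(|x|^{r-p}|x|^p)\leq \||x|^{r-p}\|_{L^\infty(\mathbb{R}^d_\theta)}\,\tau_\theta(|x|^p)=\|x\|_{L^\infty(\mathbb{R}^d_\theta)}^{r-p}\|x\|_{L^p(\mathbb{R}^d_\theta)}^{p}$. Then, in the finite case, I would apply the noncommutative Hölder inequality with exponents $p_1$ and $p_2$ to the positive operators $|x|^{\eta r}$ and $|x|^{(1-\eta)r}$, obtaining
\begin{equation*}
\tau_\theta(|x|^r)=\tau_\theta\bigl(|x|^{\eta r}\cdot|x|^{(1-\eta)r}\bigr)\leq \bigl(\tau_\theta(|x|^{p})\bigr)^{\eta r/p}\bigl(\tau_\theta(|x|^{q})\bigr)^{(1-\eta)r/q}.
\end{equation*}
Taking the $r$-th root yields $\|x\|_{L^r(\mathbb{R}^d_\theta)}\leq \|x\|_{L^p(\mathbb{R}^d_\theta)}^{\eta}\|x\|_{L^q(\mathbb{R}^d_\theta)}^{1-\eta}$, as required.

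Alternatively, and essentially equivalently, one can use the generalised singular value representation \eqref{mu-norm} to reduce to classical Hölder on $(0,\infty)$: apply the ordinary Hölder inequality to $\int_0^\infty \mu(t,x)^{\eta r}\,\mu(t,x)^{(1-\eta)r}\,dt$ with the same pair of conjugate exponents $p_1,p_2$. This variant avoids invoking the noncommutative Hölder inequality and is self-contained given the material already recorded in the preliminaries. There is no real obstacle here; the only point to be careful about is the boundary case $q=\infty$, which, as noted above, has to be handled by a direct estimate rather than by Hölder's inequality with a finite exponent.
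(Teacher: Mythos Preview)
Your proof is correct and follows essentially the same approach as the paper: factor $|x|^r=|x|^{\eta r}|x|^{(1-\eta)r}$ and apply the noncommutative H\"older inequality with the conjugate pair $\frac{p}{\eta r}$ and $\frac{q}{(1-\eta)r}$. Your treatment of the boundary case $q=\infty$ and the alternative reduction via generalised singular values are additional remarks not present in the paper, but the core argument is identical.
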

\begin{proof} From the noncommutative H\"{o}lder inequality \cite[Theorem 5.2.9]{Xu2008} with 
$\frac{1}{p/r\eta}+\frac{1}{q/(1-\eta)r}=1,$   we get
\begin{eqnarray*}
\|x\|^r_{L^r(\mathbb{R}^d_\theta)}=\tau_{\theta}(|x|^{r\eta}|x|^{(1-\eta)r}) 
\leq \|x\|^{r\eta}_{L^p(\mathbb{R}^d_\theta)}\|x\|^{(1-\eta)r}_{L^q(\mathbb{R}^d_\theta)}.  
\end{eqnarray*} 
This shows \eqref{inter-lem1}, thereby completing the proof.
\end{proof}

\begin{lem}(Logarithmic H\"{o}lder inequality)\label{log-holder}
 Let $1\leq p< q< \infty.$ Then for any $0\neq x\in L^p(\mathbb{R}^d_\theta)\cap L^q(\mathbb{R}^d_\theta)$ we have    \begin{equation}\label{log-estimate}
\tau_{\theta}\left(\frac{|x|^p}{\|x\|^p_{L^p(\mathbb{R}^d_\theta)}}\log \Big(\frac{|x|^p}{\|x\|^p_{L^p(\mathbb{R}^d_\theta)}}\Big)\right)\leq \frac{q}{q-p}\log \Big(\frac{\|x\|^p_{L^q(\mathbb{R}^d_\theta)}}{\|x\|^p_{L^p(\mathbb{R}^d_\theta)}}\Big).
 \end{equation}
\end{lem}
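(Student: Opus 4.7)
The plan is to reduce the non-commutative inequality to a scalar Jensen estimate via the functional calculus on the positive operator $|x|$ affiliated with $(L^{\infty}(\mathbb{R}^{d}_{\theta}), \tau_{\theta})$. First I would assume $x\neq 0$ and normalise by setting $y:=|x|^{p}/\|x\|_{L^{p}(\mathbb{R}^{d}_{\theta})}^{p}$, which is a positive operator with $\tau_{\theta}(y)=1$. Letting $m$ denote the spectral measure of $y$ on $(0,\infty)$, so that $\tau_{\theta}(F(y))=\int_{0}^{\infty}F(\lambda)\,dm(\lambda)$ for positive Borel $F$, the normalisation turns $d\mu(\lambda):=\lambda\,dm(\lambda)$ into a probability measure on $(0,\infty)$. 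This is the probability measure against which Jensen will be applied.

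Next, setting $s:=q/p>1$, I would rewrite
\[
\tau_{\theta}(y\log y)=\int_{0}^{\infty}\lambda\log\lambda\,dm(\lambda)=\frac{1}{s-1}\int_{0}^{\infty}\log \lambda^{s-1}\,d\mu(\lambda),
\]
and apply Jensen's inequality for the concave function $\log$ against the probability measure $\mu$ to bound the right-hand side by
\[
\frac{1}{s-1}\log\!\int_{0}^{\infty}\lambda^{s-1}\,d\mu(\lambda)=\frac{1}{s-1}\log\tau_{\theta}(y^{s}).
\]
Since $\tau_{\theta}(y^{s})=\|x\|_{L^{q}(\mathbb{R}^{d}_{\theta})}^{q}\big/\|x\|_{L^{p}(\mathbb{R}^{d}_{\theta})}^{q}$ and $\tfrac{1}{s-1}=\tfrac{p}{q-p}$, unwinding the definitions recovers precisely \eqref{log-estimate}.

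The main technical obstacle is justifying the spectral-calculus identity $\tau_{\theta}(F(y))=\int F\,dm$ for the unbounded choices $F(\lambda)=\lambda\log\lambda$ and $F(\lambda)=\lambda^{s}$ without any a priori boundedness of $y$. This is standard for $\tau_{\theta}$-measurable positive operators in a semifinite von Neumann algebra, and the assumption $x\in L^{p}(\mathbb{R}^{d}_{\theta})\cap L^{q}(\mathbb{R}^{d}_{\theta})$ is exactly what guarantees $y\in L^{1}(\mathbb{R}^{d}_{\theta})\cap L^{s}(\mathbb{R}^{d}_{\theta})$, making $\tau_{\theta}(y^{s})$ finite and every integral above absolutely convergent; the nontrivial contribution from the set $\{y\leq 1\}$, where $\lambda\log\lambda<0$, is controlled similarly using $\tau_{\theta}(y)=1$. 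As an alternative, one could instead apply the interpolation Lemma \ref{inter-lem} for $r\in(p,q)$, take logarithms, divide by $r-p$, and let $r\to p^{+}$; but then one still has to identify the limit as $\frac{1}{p^{2}}\tau_{\theta}(y\log y)$, which again reduces to functional calculus on $|x|$, so the Jensen route is the cleaner path.
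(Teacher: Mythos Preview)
Your Jensen argument is correct and gives the inequality cleanly. The paper proceeds differently: it sets $f(1/r)=\log\|x\|_{L^{r}(\mathbb{R}^{d}_{\theta})}$, shows $f$ is convex using the interpolation Lemma~\ref{inter-lem}, computes $f'(s)$ explicitly (this is where the term $\tau_{\theta}(|x|^{p}\log|x|)/\tau_{\theta}(|x|^{p})$ appears), and then applies the secant inequality $f'(s)\ge \frac{f(s_{0})-f(s)}{s_{0}-s}$ at $s=1/p$, $s_{0}=1/q$. Your route bypasses the differentiation step entirely by recognising that, after normalisation, the inequality is a single application of Jensen for $\log$ against the probability measure $\lambda\,dm(\lambda)$; this is shorter and conceptually more transparent. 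The paper's approach, on the other hand, makes the role of the interpolation inequality explicit and fits the standard ``differentiate the $L^{r}$ norm'' template seen in commutative proofs of log-Sobolev inequalities. One small remark: your claim that the contribution from $\{y\le 1\}$ is ``controlled using $\tau_{\theta}(y)=1$'' is not literally a bound---$\tau_{\theta}(y)=1$ does not by itself force $\int_{0}^{1}\lambda|\log\lambda|\,dm<\infty$---but this is harmless, since if that integral diverges the left-hand side of \eqref{log-estimate} is $-\infty$ and the inequality is trivial, while otherwise all integrals are finite and Jensen applies as stated.
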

\begin{proof} The proof is a modification of the proof in \cite{CAM2023}.  Let us consider the 
function 
$$f(\frac{1}{r})=\log (\|x\|_{L^r(\mathbb{R}^d_\theta)}), \quad r>0.$$
It follows from Lemma \ref{inter-lem} with  $r>1$ and $\eta\in (0,1)$ such that $\frac{1}{r}=\frac{\eta}{p}+\frac{1-\eta}{q}$ that 
\begin{eqnarray*}f(\frac{1}{r})&=&\log (\|x\|_{L^r(\mathbb{R}^d_\theta)}) \overset{\text{(\ref{inter-lem})}}{\leq}  \log (\|x\|^{\eta}_{L^p(\mathbb{R}^d_\theta)}\|x\|^{1-\eta}_{L^q(\mathbb{R}^d_\theta)})\\
&=&\log (\|x\|^{\eta}_{L^p(\mathbb{R}^d_\theta)})+\log (\|x\|^{1-\eta}_{L^q(\mathbb{R}^d_\theta)})\\
&=&\eta f(\frac{1}{p})+(1-\eta) f(\frac{1}{q}), 
\end{eqnarray*}
which implies that the function $f$ is convex. Since
$$f(s)=s\log (\tau_{\theta}(|x|^{1/s})),$$ taking derivative with respect to $s$ (see \cite[Lemma 4.2]{Xiong}), we obtain
\begin{eqnarray*}f'(s)&=&\log (\tau_{\theta}(|x|^{1/s}))+s\Big(\log (\tau_{\theta}(|x|^{1/s}))\Big)'_{s}\\
&=&\log (\tau_{\theta}(|x|^{1/s}))+s\frac{\Big(\tau_{\theta}(|x|^{1/s})\Big)'_{s}}{\tau_{\theta}(|x|^{1/s})}\\
&=&\log (\tau_{\theta}(|x|^{1/s}))-\frac{1}{s}\frac{\tau_{\theta}(|x|^{1/s}\log(|x|))}{\tau_{\theta}(|x|^{1/s})}. 
\end{eqnarray*}
On the other hand, since $f$ is a differentiable convex function of $s$, it follows that
$$f'(s)\geq \frac{f(s_0)-f(s)}{s_0-s}, \quad s_0>s>0.$$
Taking $s=\frac{1}{p}$ and $s_0=\frac{1}{q},$ we have
\begin{equation}\label{almost-log-holder}
    p\frac{\tau_{\theta}(|x|^{p}\log(|x|))}{\tau_{\theta}(|x|^{p})}-\log (\tau_{\theta}(|x|^{p}))
\leq\frac{pq}{q-p}\log \Big(\frac{\|x\|_{L^q(\mathbb{R}^d_\theta)}}{\|x\|_{L^p(\mathbb{R}^d_\theta)}}\Big).
\end{equation}
But, by linearity of $\tau_{\theta}$ the left hand side of the previous inequality is represented as follows
\begin{eqnarray*}&p&\frac{\tau_{\theta}(|x|^{p}\log(|x|))}{\tau_{\theta}(|x|^{p})}-\log (\tau_{\theta}(|x|^{p}))=p\frac{\tau_{\theta}(|x|^{p}\log(|x|))}{\tau_{\theta}(|x|^{p})}-\frac{\tau_{\theta}(|x|^{p})\cdot\log (\tau_{\theta}(|x|^{p}))}{\tau_{\theta}(|x|^{p})}\\
&=&\frac{\tau_{\theta}(|x|^{p}\log(|x|^p))}{\tau_{\theta}(|x|^{p})}-\frac{\tau_{\theta}\Big(|x|^{p}\log (\|x\|^{p}_{L^p(\mathbb{R}^d_\theta)})\Big)}{\tau_{\theta}(|x|^{p})}\\
&=&\frac{\tau_{\theta}(|x|^{p}\log(|x|^p))-\tau_{\theta}\Big(|x|^{p}\log (\|x\|^{p}_{L^p(\mathbb{R}^d_\theta)})\Big)}{\tau_{\theta}(|x|^{p})}\\
&=&\frac{\tau_{\theta}\Big(|x|^{p}\Big(\log(|x|^p)-\log (\|x\|^{p}_{L^p(\mathbb{R}^d_\theta)})\Big)\Big)}{\tau_{\theta}(|x|^{p})}=\tau_{\theta}\left(\frac{|x|^p}{\|x\|^p_{L^p(\mathbb{R}^d_\theta)}}\log \Big(\frac{|x|^p}{\|x\|^p_{L^p(\mathbb{R}^d_\theta)}}\Big)\right).
\end{eqnarray*}
Hence, the assertion follows from \eqref{almost-log-holder}.
\end{proof}
As an application of Theorem \ref{sobolev-embed} for the case $1 < p \leq 2 \leq p^* < \infty$ we obtain the following logarithmic type Sobolev inequality. 
\begin{thm}(Logarithmic Sobolev inequality)\label{sobolev-ineq} Let  $1<p< 2$ and   $d(\frac{2-p}{2p})\leq s<\frac{d}{p}.$
Then for any $x\in L_{s}^p(\mathbb{R}^d_\theta)$ we have the fractional
logarithmic Sobolev  inequality
\begin{equation}\label{sobolev-log-estimate}
\tau_{\theta}\left(\frac{|x|^p}{\|x\|^p_{L^p(\mathbb{R}^d_\theta)}}\log \Big(\frac{|x|^p}{\|x\|^p_{L^p(\mathbb{R}^d_\theta)}}\Big)\right)\leq \frac{d}{sp}\log \Big(C_{p,d}\frac{\|x\|^p_{L_s^p(\mathbb{R}^d_\theta)}}{\|x\|^p_{L^p(\mathbb{R}^d_\theta)}}\Big),  \,\ x\in L_{s}^p(\mathbb{R}^d_\theta),
 \end{equation}
where $C_{p,d}>0$ is a constant intependent of $x.$
\end{thm}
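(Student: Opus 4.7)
The plan is to combine the Sobolev embedding in Theorem \ref{sobolev-embed} with the logarithmic H\"older inequality of Lemma \ref{log-holder}, specialised at the Sobolev conjugate exponent
$$q := p^{\ast} = \frac{dp}{d-sp}.$$
First I would verify that the hypothesis $d\bigl(\tfrac{2-p}{2p}\bigr)\le s < \tfrac{d}{p}$ is exactly what is needed to place this $q$ in the admissible range of both auxiliary results. The upper bound $s<d/p$ makes $q$ finite and strictly larger than $p$; the lower bound $s\ge d(2-p)/(2p)$ is equivalent to $q\ge 2$. Hence $1<p\le 2\le q<\infty$, which is precisely the range of Theorem \ref{sobolev-embed}, and $s=d(1/p-1/q)$ so the embedding applies at the exponent $s$ itself.

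Next I would carry out the elementary algebraic simplification of the coefficient $\tfrac{q}{q-p}$ appearing in Lemma \ref{log-holder}. Direct calculation gives $q-p = \tfrac{sp^{2}}{d-sp}$, so
$$\frac{q}{q-p} = \frac{dp/(d-sp)}{sp^{2}/(d-sp)} = \frac{d}{sp}.$$
This is exactly the coefficient required on the right-hand side of \eqref{sobolev-log-estimate}.

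With this set up, the main inequality is produced in two steps. Since $1\le p<q<\infty$ and $x\in L^{p}_{s}(\mathbb{R}^{d}_{\theta})\cap L^{p}(\mathbb{R}^{d}_{\theta})$ (note that Theorem \ref{sobolev-embed} provides $x\in L^{q}(\mathbb{R}^{d}_{\theta})$, hence $x\in L^{p}(\mathbb{R}^{d}_{\theta})\cap L^{q}(\mathbb{R}^{d}_{\theta})$), Lemma \ref{log-holder} yields
$$\tau_{\theta}\!\left(\frac{|x|^{p}}{\|x\|^{p}_{L^{p}(\mathbb{R}^{d}_{\theta})}}\log\!\Bigl(\frac{|x|^{p}}{\|x\|^{p}_{L^{p}(\mathbb{R}^{d}_{\theta})}}\Bigr)\right) \le \frac{d}{sp}\log\!\Bigl(\frac{\|x\|^{p}_{L^{q}(\mathbb{R}^{d}_{\theta})}}{\|x\|^{p}_{L^{p}(\mathbb{R}^{d}_{\theta})}}\Bigr).$$
Then Theorem \ref{sobolev-embed} gives $\|x\|_{L^{q}(\mathbb{R}^{d}_{\theta})}\le c_{p,q,s}\|x\|_{L^{p}_{s}(\mathbb{R}^{d}_{\theta})}$, and because the logarithm is monotone I can insert this estimate inside the log on the right-hand side, setting $C_{p,d}:=c^{p}_{p,q,s}$, to obtain \eqref{sobolev-log-estimate}.

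I do not expect any serious obstacle: the only non-trivial point is the observation that the two endpoint conditions on $s$ in the statement are precisely the conditions making the Sobolev conjugate $q=dp/(d-sp)$ lie in the common range $[2,\infty)\cap(p,\infty)$ of the two underlying lemmas, while at the same time producing the sharp coefficient $d/(sp)$ in front of the logarithm. Once this choice of $q$ is made, the rest is a direct concatenation of Lemma \ref{log-holder} and Theorem \ref{sobolev-embed}.
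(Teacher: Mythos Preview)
Your proposal is correct and follows essentially the same approach as the paper: set $q=p^{\ast}=\frac{dp}{d-sp}$, check that the hypotheses on $s$ place $q$ in the range $1<p\le 2\le q<\infty$ required by Theorem \ref{sobolev-embed}, compute $\frac{q}{q-p}=\frac{d}{sp}$, and then combine Lemma \ref{log-holder} with the Sobolev embedding. Your write-up is in fact a bit more explicit than the paper's in verifying the endpoint conditions and the coefficient computation.
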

\begin{proof}
By Theorem \ref{sobolev-embed} we have $L_s^p(\mathbb{R}^d_\theta)\subset L^{p^*}(\mathbb{R}^d_\theta)$ with $1<p\leq 2\leq p^*<\infty$ and $s\geq \frac{d}{p}-\frac{d}{p^*}$ such that $p\neq p^*.$ This means that there exists a constant $C_{p,p^*,d}>0$ such that
$$
\|x\|_{L^{p^*}(\mathbb{R}^d_\theta)}\leq C_{p,p^*,d}\|x\|_{L_s^p(\mathbb{R}^d_\theta)}, \,\ x\in L_s^p(\mathbb{R}^d_\theta). 
$$
Since $p^*=\frac{dp}{d-sp}\geq2$ by the assumption, taking $q:=p^*$ in Lemma \ref{log-holder} and since $\frac{p^*}{p^* -q}=\frac{d}{sp},$ the assertion follows from \eqref{log-estimate}.
\end{proof}

\subsection{Nash-type inequality on the quantum Euclidean space}
In this subsection, we prove the Nash-type inequality on the quantum Euclidean space. In the classical case, this inequality is applied as a main tool in computing the decay rate for the heat equation for the sub-Laplacian.
\begin{thm}\label{Nash-inequality}(Nash-type inequality). Let $x\in W^{1,2}(\mathbb{R}^d_\theta)$ and $d>2$. Then  we have 
 \begin{eqnarray}\label{Nash's-inequality}
  \|x\|^{1+\frac{2}{d}}_{L^{2}(\mathbb{R}^d_\theta)} \leq C_{1,d}\| x\|_{W^{1,2}(\mathbb{R}^d_\theta)}\|x\|^{\frac{2}{d}}_{L^{1}(\mathbb{R}^d_\theta)},  
 \end{eqnarray}   
  where $C_{1,d}>0$ is a constant independent of $x.$
\end{thm}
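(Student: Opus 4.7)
The plan is to prove this by the classical frequency-splitting argument, which translates cleanly to the quantum Euclidean setting thanks to the Plancherel identity \eqref{Parseval} and the fact that the partial derivatives $\partial^\theta_j$ are diagonalised by the noncommutative Fourier transform. The key identity I will exploit is that, since $\partial^\theta_j\lambda_\theta(f) = \lambda_\theta(it_j f(t))$ and $\widehat{\lambda_\theta(f)} = f$ by \eqref{**}, one has
\begin{equation*}
\widehat{\partial^\theta_j x}(\xi) = i\xi_j\,\widehat{x}(\xi),\qquad x\in\mathcal{S}(\mathbb{R}^d_\theta),
\end{equation*}
so that Plancherel gives $\int_{\mathbb{R}^d}|\xi|^2|\widehat{x}(\xi)|^2\,d\xi = \|\nabla_\theta x\|_{L^2(\mathbb{R}^d_\theta)}^2$.

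First I would apply Plancherel's identity \eqref{Parseval} to write $\|x\|_{L^2(\mathbb{R}^d_\theta)}^2 = \int_{\mathbb{R}^d}|\widehat{x}(\xi)|^2\,d\xi$ and split this integral at a radius $R>0$ to be chosen later. For the low-frequency part $|\xi|\le R$, I would use the trivial bound $|\widehat{x}(\xi)| = |\tau_\theta(xU_\theta(\xi)^*)| \le \|x\|_{L^1(\mathbb{R}^d_\theta)}$ (this is the endpoint $p=1$ of \eqref{direct-H-Y_ineq}, or just H\"older with $\|U_\theta(\xi)^*\|_\infty = 1$), yielding
\begin{equation*}
\int_{|\xi|\le R}|\widehat{x}(\xi)|^2\,d\xi \le c_d R^d\|x\|_{L^1(\mathbb{R}^d_\theta)}^2.
\end{equation*}
For the high-frequency part $|\xi|>R$, I would use $1\le |\xi|^2/R^2$ and the gradient/Fourier identity above to obtain
\begin{equation*}
\int_{|\xi|>R}|\widehat{x}(\xi)|^2\,d\xi \le \frac{1}{R^2}\int_{\mathbb{R}^d}|\xi|^2|\widehat{x}(\xi)|^2\,d\xi = \frac{1}{R^2}\|\nabla_\theta x\|_{L^2(\mathbb{R}^d_\theta)}^2.
\end{equation*}

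Adding these gives $\|x\|_{L^2(\mathbb{R}^d_\theta)}^2 \le c_d R^d\|x\|_{L^1(\mathbb{R}^d_\theta)}^2 + R^{-2}\|\nabla_\theta x\|_{L^2(\mathbb{R}^d_\theta)}^2$, and optimising in $R$ (the minimum occurs at $R^{d+2}\asymp \|\nabla_\theta x\|_{L^2(\mathbb{R}^d_\theta)}^2/\|x\|_{L^1(\mathbb{R}^d_\theta)}^2$) yields
\begin{equation*}
\|x\|_{L^2(\mathbb{R}^d_\theta)}^{2+\frac{4}{d}} \le C_{1,d}^2\,\|\nabla_\theta x\|_{L^2(\mathbb{R}^d_\theta)}^2\,\|x\|_{L^1(\mathbb{R}^d_\theta)}^{\frac{4}{d}},
\end{equation*}
which after taking square roots is \eqref{Nash's-inequality} with $\|\nabla_\theta x\|_{L^2(\mathbb{R}^d_\theta)}$ in place of $\|x\|_{W^{1,2}(\mathbb{R}^d_\theta)}$. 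Finally I would close the argument by noting the trivial inequality $\|\nabla_\theta x\|_{L^2(\mathbb{R}^d_\theta)} = (\sum_j\|\partial^\theta_j x\|_{L^2(\mathbb{R}^d_\theta)}^2)^{1/2} \le \sum_j\|\partial^\theta_j x\|_{L^2(\mathbb{R}^d_\theta)} \le \|x\|_{W^{1,2}(\mathbb{R}^d_\theta)}$. The only subtle point is the Fourier-side identification of $\|\nabla_\theta x\|_{L^2}$, but this is essentially immediate from the paper's definitions in the differential calculus subsection; no real obstacle is expected.
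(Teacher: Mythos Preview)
Your argument is correct, and it is a genuinely different route from the paper's. The paper derives the Nash inequality as a corollary of the Sobolev embedding $W^{1,2}(\mathbb{R}^d_\theta)\hookrightarrow L^{2^*}(\mathbb{R}^d_\theta)$ with $2^*=\tfrac{2d}{d-2}$ (Theorem~\ref{sobolev-embed}, hence ultimately the H\"ormander multiplier machinery of Theorem~\ref{H-multiplier theorem-th}) combined with the $L^p$-interpolation of Lemma~\ref{inter-lem}; you instead run Nash's original frequency-splitting argument directly on the Fourier side, using only Plancherel \eqref{Parseval}, the endpoint bound $|\widehat{x}(\xi)|\le\|x\|_{L^1(\mathbb{R}^d_\theta)}$, and the identity $\widehat{\partial^\theta_j x}(\xi)=i\xi_j\widehat{x}(\xi)$. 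Your approach is more elementary and self-contained: it does not require the Sobolev embedding or the multiplier theorem, it yields the slightly stronger form with $\|\nabla_\theta x\|_{L^2}$ in place of the full $W^{1,2}$-norm, and it works for every $d\ge 1$ rather than only $d>2$. The paper's approach, on the other hand, has the virtue of exhibiting the Nash inequality as an immediate consequence of the Sobolev embedding already established, fitting the overall narrative of applications of Theorem~\ref{H-multiplier theorem-th}.
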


\begin{proof} 
By the Sobolev inequality in Theorem \ref{sobolev-embed}, we get
\begin{eqnarray}\label{s-inequality}
  \|x\|_{L^{2^*}(\mathbb{R}^d_\theta)} \leq C_d \|x\|_{W^{1,2}(\mathbb{R}^d_\theta)},\quad 2^*=\frac{2d}{d-2},
 \end{eqnarray} 

 where $C_d>0$ is a constant independent of $x$.  

By Lemma \ref{inter-lem} with 
 $r=2$, $q=2^* = \frac{2d}{d-2},$  and $p=1,$ we obtain
\begin{eqnarray}\label{interpolation-inequality}
  \|x\|_{L^{2}(\mathbb{R}^d_\theta)} \leq \|x\|^{\eta}_{L^{1}(\mathbb{R}^d_\theta)}\|x\|^{1-\eta}_{L^{2^*}(\mathbb{R}^d_\theta)},
 \end{eqnarray} 
where
\begin{eqnarray}\label{parametr}
\eta:=\frac{\frac{1}{r}-\frac{1}{q}}{\frac{1}{p}-\frac{1}{q}}=\frac{\frac{1}{2}-\frac{d-2}{2d}}{\frac{d+2}{2d}}=\frac{2}{d+2}.
\end{eqnarray} 
Hence, combining    \eqref{s-inequality},  \eqref{interpolation-inequality}, and \eqref{parametr}, we get
\begin{eqnarray*}
\|x\|_{L^{2}(\mathbb{R}^d_\theta)} &\overset{ \text{\eqref{s-inequality} \eqref{interpolation-inequality}} }{\leq}  ( C_d \| x\|_{W^{1,2}(\mathbb{R}^d_\theta)})^{1-\eta}  \|x\|^{\eta}_{L^{1}(\mathbb{R}^d_\theta)} \overset{ \text{(\ref{parametr})} }{=} C^{\frac{d}{d+2}}_d \| x\|^{\frac{d}{d+2}} _{W^{1,2}(\mathbb{R}^d_\theta)} \|x\|^{\frac{2}{d+2}}_{L^{1}(\mathbb{R}^d_\theta)}. 
\end{eqnarray*}
 Rising both sides of the last inequality to power $\frac{d+2}{d}$, one gets the inequality \eqref{Nash's-inequality}.
\end{proof}

\begin{rem} Note that it is well-known in classical Euclidean spaces that the logarithmic Sobolev inequality is equivalent to the Nash inequality when $p=2$. First, we conjectured in the previous version of this paper that the equivalence remains true also in noncommutative setting. However, this was solved recently in an another paper by the authors (see \cite[Theorem 4.2]{RShT-JFA}. For more details about these inequalities and their applications in PDEs for Lie groups we refer the reader to \cite{CAM2023, GKR, Mer}.
\end{rem}

{\bf Conflict of interest.}
We can conceive of no conflict of
interest in the publication of this paper. The work has not
been published previously and it has not been submitted for publication elsewhere.

{\bf Data availability}
No new data were created or analysed during this study. Data sharing is not applicable to this article.

\section{Acknowledgements}

The work was partially supported by the grant No. AP23483532 of the Science Committee of the Ministry of Science and Higher Education of the Republic of Kazakhstan.
Authors would like to thank to  Dr. Edward McDonald for helpful discussions on non-commutative Euclidean spaces and showing us the proof of Lemma \ref{F_trans_Fourier_trans}. 
The authors were partially supported by Odysseus and Methusalem grants (01M01021 (BOF Methusalem) and 3G0H9418 (FWO Odysseus)) from Ghent Analysis and PDE center at Ghent University. The first author was also supported by the EPSRC grants EP/R003025/2 and EP/V005529/1.

Authors thank the anonymous referee for reading the paper and providing thoughtful comments, which improved the exposition of the paper.

\begin{center}

\end{center}


\begin{thebibliography}{999} 

\bibitem{AR} R.~Akylzhanov, M.~Ruzhansky. {\it $L_p-L_q$ multipliers on locally compact groups}. J. Funct. Anal., 278(3) (2020), 49, 108324. 

\bibitem{AMR}R. Akylzhanov, S. Majid, and M. Ruzhansky. {\it Smooth dense subalgebras and Fourier multipliers on compact quantum groups}, Comm. Math. Phys. 362(3) (2018), 761–799.

\bibitem{ARN} R.~Akylzhanov, M.~Ruzhansky, E.D.~Nursultanov. {\it Hardy-Littlewood, Hausdorff-Young-Paley inequalities, and $L_p-L_q$ Fourier multipliers on compact homogeneous manifolds}. J. Math. Anal. Appl., 479(2) (2019), 1519--1548.

\bibitem{ANR} R.~Akylzhanov, E.D.~Nursultanov, M.~Ruzhansky. {\it Hardy-Littlewood-Paley inequalities and Fourier multipliers on SU(2)}. Studia Math., 234 (2016), 1--29.

\bibitem{BL1976} J.~Bergh and J.~L\"{o}fstr\"{o}m. {\it Interpolation spaces. An introduction. Springer-Verlag,} Berlin-New York, 1976. Grundlehren der Mathematischen Wissenschaften, No. 223. 


\bibitem{BW} A.~Barchielli and R.F.~Werner. {\it Hybrid quantum-classical systems: quasi-free Markovian dynamics.} Int. J. Quantum Inf., 22(5): No. 2440002, 51, 2024.


 
\bibitem{CGRS} A.L.~Carey, V.~Gayral, A.~Rennie, and F.~Sukochev. {\it Index theory for locally compact noncommutative geometries.} Mem. Amer. Math. Soc., 231(1085):vi+130, 2014.

\bibitem{CAM2023}  M.~Chatzakou, A.~Kassymov,   M.~Ruzhansky. {\it Logarithmic Sobolev-Type Inequalities on Lie Groups}. J. Geom. Anal., 2024, 34(9), 275.

\bibitem{CK} M. Chatzakou, V. Kumar {\it $L_p-L_q$ Boundedness of Fourier Multipliers Associated with the Anharmonic Oscillator.} J. Fourier Anal. Appl. 29, 73 (2023).


\bibitem{Cow1993} M.~Cowling, S.~Giulini, S.~Meda, $L^p-L^q$ estimates for functions of the Laplace-Beltrami operator on noncompact symmetric spaces. I, Duke Math. J. 72 (1) (1974) 109–150.



\bibitem{Cow1974} M.G. Cowling, Spaces $Aqp$ and $L^p-L^q$ Fourier Multipliers, PhD thesis, The Flinders University of South Australia, 1974. 

\bibitem{DW} L.~Dammeier and R.F.~Werner. {\it Quantum-classical hybrid systems and their quasifree transformations.} Quantum, 7 (2023), 1068 pp.

\bibitem{DPS} P.G.~Dodds, B.~de Pagter and F.A.~Sukochev. {\it Noncommutative Integration and Operator Theory}. Publisher Name
Springer, Berlin, Heidelberg. 2023. 

\bibitem{FK} T.~Fack, H.~Kosaki.  {\it Generalized $s$-numbers of $\tau$-measurable operators}, Pacific J. Math., {\bf 123}:2 (1986), 269--300.
\bibitem{GJM} L.~Gao, M.~Junge, and E.~McDonald. {\it Quantum Euclidean spaces with noncommutative derivatives.}  J. Noncommut. Geom. 16 (2022), 153–213.
\bibitem{GGSVM} V.~Gayral, J.M.~Gracia-Bondia, B.~Iochum, T.~Sch\"ucker, and J.C.~Varilly. {\it Moyal planes are spectral triples.} Comm. Math. Phys., 246(3):569--623, 2004.

\bibitem{GJP} A.M.~Gonz\'{a}lez-P\'{e}rez, M. Junge, and J. Parcet. {\it Singular integrals in quantum Euclidean spaces.}  Mem. Amer. Math. Soc. 272 (2021), no. 1334, xiii+90 pp.

\bibitem{GKR} S. Ghosh, V. Kumar, and M. Ruzhansky, Best constants in subelliptic fractional  Sobolev and Gagliardo-Nirenberg inequalities and ground states on stratified Lie groups. https://doi.org/10.48550/arXiv.2306.07657 
 
\bibitem{green-book} J.M.~Gracia-Bondia, J.~V\'{a}rilly and H.~Figueroa.  {\it  Elements of noncommutative geometry,}  Birkhäuser Boston, Inc., Boston, MA, 2001. xviii+685 pp.

\bibitem{Gro} H. J. Groenewold. {\it On the principles of elementary quantum mechanics}. Physica, 12:405--460, 1946.

\bibitem{H} B. C. Hall. Quantum theory for mathematicians, volume 267 of Graduate Texts in Mathematics. Springer,
New York, 2013.

\bibitem{G2008} L.~Grafakos. {\it Classical Fourier Analysis}. Second Edition. Springer-Verlag, New York (2008) 

\bibitem{HL}G.H.~Hardy, J.E.~Littlewood. {\it Some new properties of Fourier constants}. Math. Ann., 97(1):159--209, 1927.
\bibitem{HJS} H.~J.~Samuelsen. {\it Fourier-Wigner multipliers and the Bochner-Riesz conjecture for Schatten class operators.}  arXiv preprint
arXiv:2502.16248, 2025.
\bibitem{HLW} G.~Hong, X.~Lai, L.~Wang. {\it Fourier restriction estimates on quantum Euclidean spaces.} Adv. Math. {\bf 430} (2023), 109232, 24 pp.
\bibitem{Hor} L.~H\"ormander. {\it Estimates for translation invariant operators in $L_p$ spaces}. Acta Math., 104:93--140, 1960.
\bibitem{KM} S.~Kaczmarz, J.~Marcinkiewicz. {\it Sur les multiplicateurs des s\'{e}ries orthogonales}. Studia Math.,7:73--81, 1938.


\bibitem{K2} V. Kumar, M. Ruzhansky, {\it $L_p-L_q$ Multipliers on commutative hypergroups.} J. Austral. Math. Soc., 115 (2023), 375-395.

\bibitem{K3}V. Kumar, M. Ruzhansky, {\it $L_p-L_q$ boundedness of $(k,a)-$Fourier multipliers with applications to nonlinear equations.} Int. Math. Res. Not. IMRN, 2023, no. 2, 1073-1093.

\bibitem{K4} V. Kumar, M. Ruzhansky, {\it Hardy-Littlewood inequality and Fourier multipliers on compact hypergroups.} J. Lie Theory 32(2), 475-498 (2022)

 \bibitem{LSZ} S.~Lord, F.~Sukochev, D.~Zanin. {\it Singular traces. Theory and applications.} De Gruyter Studies in Mathematics, {\bf 46}. De Gruyter, Berlin, 2013.

 \bibitem{LMcSZ}S. Lord, Ed. McDonald, F. Sukochev, and D. Zanin, {\it Singular traces.} Vol. 2. Applications, Berlin, Boston: De Gruyter, 2023. https://doi.org/10.1515/9783110700176

\bibitem{MSX}E.~McDonald, F.~Sukochev, and X.~Xiong. {\it Quantum differentiability on noncommutative Euclidean spaces.} 
Comm. Math. Phys., 379(2):491--542, 2020.

\bibitem{Mikh} S.G.~Mikhlin. {\it On the multipliers of Fourier integrals}. Dokl. Akad. Nauk SSSR (N.S.), 109:701--703, 1956.

\bibitem{Mc}E.~McDonald. {\it Nonlinear partial differential equations on noncommutative Euclidean spaces.} J. Evol. Equ. 24:16 (2024), 1--58. 

\bibitem{Mar} J. Marcinkiewicz. {\it Sur les multiplicateurs des s\'{e}ries de fourier}. Studia Math., 8:78--91, 1939.

\bibitem{Mer} J. Merker. Generalizations of logarithmic Sobolev inequalities. Discrete Contin. Dyn. Syst. Ser. S, 1(2):329–338, 2008.

\bibitem{M} J.E.~Moyal. {\it Quantum mechanics as a statistical theory.} Proc. Cambridge Philos. Soc., 45:99-124, 1949.
\bibitem{M-Nur} M.~Nursultanov. {\it $L_p-L_q$ boundedness of Fourier multipliers}. 2022. 1--22. arXiv:2210.11183
\bibitem{NT1} E.D.~Nursultanov,  N.T.~Tleukhanova. {\it On multipliers of multiple Fourier series}. Tr. Mat. Inst. Steklova, 227 (Issled. po Teor. Differ. Funkts. Mnogikh Perem. i ee Prilozh. 18):237--242, 1999.
\bibitem{NT2} E.D.~Nursultanov,  N.T.~Tleukhanova. {\it Lower and upper bounds for the norm of multipliers of multiple trigonometric Fourier series in Lebesgue spaces}. Funktsional. Anal. i Prilozhen., 34(2):86--88, 2000.
\bibitem{PXu} G.~Pisier and Q.~Xu. {\it Non-commutative $L_p$-spaces.}  In Handbook of the geometry of Banach spaces, Vol. 2, pages 1459–1517. North-Holland, Amsterdam, 2003.
 \bibitem{RV} J.~Rozendaal,  M.~Veraar. {\it Fourier multiplier theorems involving type and cotype.} J. Fourier Anal. Appl., 24(2):583–619, 2018.

\bibitem{Rieffel} M.~Rieffel.  {\it Deformation quantization for actions of $\mathbb{R}^d.$} Mem. Amer. Math. Soc. {\bf 106}:506 (1993), x+93 pp.

\bibitem{RT-AdvOper} M.~Ruzhansky, K.~Tulenov. {\it A note on $L^p$-$L^q$ boundedness of Fourier multipliers on noncommutative spaces}.  Adv. Oper. Theory 10, 48 (2025). https://doi.org/10.1007/s43036-025-00436-y

  \bibitem{RShT-NoDEA} M.~Ruzhansky, S.~Shaimardan, K.~Tulenov. \emph{Fourier multipliers and their applications to PDE on the quantum Euclidean space}. Nonlinear Differ. Equ. Appl. 32, 85 (2025). https://doi.org/10.1007/s00030-025-01067-1
   
   \bibitem{RShT-JFA} M.~Ruzhansky, S.~Shaimardan, K.~Tulenov. \emph{Sobolev inequality and its applications to nonlinear PDE on noncommutative Euclidean spaces}. J. Funct. Anal. 289:11 (2025), 111143. https://doi.org/10.1016/j.jfa.2025.



\bibitem{Stein} E.M.~Stein. {\it Singular integrals and differentiability properties of functions.} Princeton Mathematical Series, No. 30. Princeton University Press, Princeton, N.J., 1970.




\bibitem{W} R.~Werner. {\it Quantum harmonic analysis on phase space.} J. Math. Phys. 25(1984), no. 5, 1404–1411.
\bibitem{Xiong} X.~Xiong. {\it Noncommutative harmonic analysis on semigroup and ultracontractivity.} Indiana University Mathematics Journal, Vol. 66, No. 6 (2017), 1921--1947.

\bibitem{Xu2008} Q.~Xu. {\it  Noncommutative $L_p$-Spaces and Martingale Inequalities.} book manuscript, 2008.

\bibitem{Y} S-G. Youn.  {\it Hardy-Littlewood inequalities on compact quantum groups of Kac type.} Analysis and PDE, Vol. 11, No. 1, (2018) 237--261.

\bibitem{Z} H. Zhang. {\it $L_p-L_q$ Fourier Multipliers on Locally Compact Quantum Groups.} J. Fourier Anal. Appl. 29, 46 (2023).

\end{thebibliography}
\end{document}